\documentclass{amsart}

\usepackage{pgfplots}
\pgfplotsset{compat=1.12}

\usepackage[active]{srcltx}

\usepackage{graphicx}

\usepackage{latexsym}
\usepackage{amsmath,amsthm}
\usepackage{amsfonts}
\usepackage[psamsfonts]{amssymb}
\usepackage{enumerate}

\usepackage[final]{showkeys}
\usepackage{url}

\RequirePackage[OT1]{fontenc}

\usepackage{calrsfs}

\usepackage{hyperref}

\theoremstyle{plain} 
\newtheorem{theorem}{Theorem}[section]
\newtheorem{corollary}[theorem]{Corollary}
\newtheorem{lemma}[theorem]
{Lemma}
\newtheorem{proposition}[theorem]{Proposition}

\theoremstyle{definition} 

\theoremstyle{definition} 

\theoremstyle{remark} 

\theoremstyle{remark} 

\newtheorem*{remark*}{Remark}
\numberwithin{equation}{section}

\makeatletter
  \renewcommand\section{\@startsection {section}{1}{\z@}%
                                   {-\bigskipamount}%
                                   {\medskipamount}%
                                   {\large\bfseries
                                   \raggedright}}

  \renewcommand\subsection{\@startsection {subsection}{2}{\z@}%
                                   {-\medskipamount}%
                                   {\smallskipamount}%
                                   {\bfseries
                                   \raggedright}}
\makeatother

\newcommand{\sign}{\operatorname{sign}}
\newcommand{\card}{\operatorname{card}}

\newcommand{\arccot}{\operatorname{arccot}}

\renewcommand{\gg}{>\kern-2pt>}
\renewcommand{\ll}{<\kern-2pt<}

\renewcommand{\le}{\leqslant}
\renewcommand{\ge}{\geqslant}

\newcommand{\vv}{\mathbf{v}}
\newcommand{\w}{\mathbf{w}}

\renewcommand{\th}{\theta}

\newcommand{\si}{\sigma}

\newcommand{\de}{\delta}

\newcommand{\LD}{\mathcal{L}\!\mathcal{D}}
\renewcommand{\LD}{\mathcal{L}{\kern -1.9pt}\mathcal{D}}
\renewcommand{\LD}{\mathcal{D}}
\renewcommand{\LD}{\mathcal{L}{\kern -4pt}\mathcal{C}}
\renewcommand{\LD}{\mathcal{R}{\kern -3pt}\mathcal{C}}

\newcommand{\ii}[1]{\mathrm{I}\!\left\{#1\right\}}

\newcommand{\R}{{\mathbb{R}}}





\newcommand{\vp}{\varepsilon}

 \pagenumbering{arabic}

\errorcontextlines=999


\begin{document}


\title{A simpler $\R^3$ realization of the M\"obius strip}


\author{Iosif Pinelis
}

\address{Department of Mathematical Sciences\\
Michigan Technological University\\
Hough\-ton, Michigan 49931, USA\\
E-mail: ipinelis@mtu.edu}

\keywords{
M\"obius strip, M\"obius band, topological embeddings, topological geometries}

\subjclass[2010]{Primary 53A05, 57N05, 57N35, 51M04, 51M05, 51M15, 51H20; secondary 51M09, 51M30, 57N16, 51H30, 57N25}

%
%
%
%

\begin{abstract}
A very simple $\R^3$ realization of the M\"obius strip, significantly simpler than the common one, is given. For any, however large width/length ratio of the strip, it is shown that this realization, in contrast with the common one, is the 
union of a vertical segment 
and the graph of a simple rational function on a simple polynomially defined subset of $\R^2$.  
\end{abstract}

\maketitle

\tableofcontents

\section{Summary and discussion}\label{intro} 
Take any positive real number $\de$. 
The M\"obius strip (of width $2\de$) can be defined as the topological space on the set 
\begin{equation*}
P_\de:=[0,2\pi)\times[-\de,\de] 	
\end{equation*}
with the topology generated by the base consisting of all sets of the form \break 
$P_\de\cap\big(B_{p,\vp}\cup B_{\tilde p,\vp}\big)$, where $p=(t,r)\in\R^2$, $\tilde p
:=(t+2\pi,-r)$, $\vp\in(0,\infty)$, and 
$B_{p,\vp}$ is the open ball in $\R^2$ of radius $\vp$ centered at the point $p$. 

Informally, the M\"obius strip is obtained by taking the rectangle $\overline P_\de=[0,2\pi]\times[-\de,\de]$ with its usual topology, and then identifying each point of the form $(2\pi,r)\in\overline P_\de$ 
with the point $(0,-r)$. Of course, a topologically equivalent object will be obtained using, instead of $[0,2\pi]\times[-\de,\de]$, the product of any two closed intervals of finite nonzero lengths. 
However, the choice of the rectangle made here provides for slightly simpler notation. 

The M\"obius strip may be realized in (that is, homeomorphically embedded into) $\R^3$ 
in a variety of ways; here and elsewhere in this paper, the topology on any subset of $\R^3$ is assumed to be induced by the standard topology on $\R^3$. Perhaps the most common $\R^3$ realization of the M\"obius strip is via parametric equations 
\begin{equation}\label{eq:old}
(x,y,z)=\vv(t,r):=\big(\cos t+r\cos\tfrac t2\,\cos t,\sin t+r\cos\tfrac t2\,\sin t,r\sin\tfrac t2\big)  
\end{equation}
for $(t,r)\in P_\de$. 
Let us refer to this as \emph{the common (M\"obius strip) realization}. 
It can be visualized as follows. We take the closed segment of length $2\de$ along the $x$-axis centered at point $1$ on this axis; then the plane containing this segment and the $z$-axis is rotated about the $z$-axis; simultaneously, the segment is rotated at half the angular speed about its center in the revolving plane. 
Thus, the trajectory of the center of the moving segment is the circle of radius $1$ in the $xy$-plane centered at the origin of that plane, and so, one may refer to this surface as the common M\"obius strip realization \emph{of radius $1$}; by homothety, one can easily define the common M\"obius strip realization of any positive real radius. 

 
If $\de$ is not too large, then the resulting set -- traced out by the moving segment -- 
will be a realization of the M\"obius strip, that is, its homeomorphic image under the map given by parametric equations \eqref{eq:old}. 

In this note, we offer a simpler realization of the M\"obius strip -- given by parametric equations 
\begin{equation}\label{eq:new}
(x,y,z)=\w(t,r):=\big(\cos t+r\cos\tfrac t2,\sin t+r\sin\tfrac t2,r\sin\tfrac t2\big) 
\end{equation}
for $(t,r)\in P_\de$. So, instead of the products $\cos\tfrac t2\,\cos t$ and $\cos\tfrac t2\,\sin t$ in \eqref{eq:old}, we now have just $\cos\tfrac t2$ and $\sin\tfrac t2$. 
Here, instead of the segment revolving in the revolving plane through the $z$-axis, we have a moving segment, with endpoints $\w(t,-\de)$ and $\w(t,\de)$ at ``time'' $t$,  
which remains parallel to the plane $y=z$. Indeed, the last two coordinates of the vector $\w(t,\de)-\w(t,-\de)=2\de\,\big(\cos\tfrac t2,\sin\tfrac t2,\sin\tfrac t2\big)$ are equal to each other. 
The center of the moving segment still remains on the unit circle in the $xy$-plane. 
So, formula \eqref{eq:new} defines what may be referred to as \emph{the simple M\"obius strip realization (of radius $1$)}; again, by homothety, one can easily define the simple M\"obius strip realization of any positive real radius. 

These two realizations, the common \eqref{eq:old} and simple \eqref{eq:new} ones, of the M\"obius strip are shown in Figure~1, 
where, in particular, one can see mentioned ``moving'' segments. 
In this note, we allow ourselves some abuse of terminology, letting the term ``realization'' refer both to one of the maps $\vv$ or $\w$ and to the corresponding image $\vv(P_\de)$ or $\w(P_\de)$. 

\begin{figure}[h]
	\centering
		\includegraphics[width=.90\textwidth]{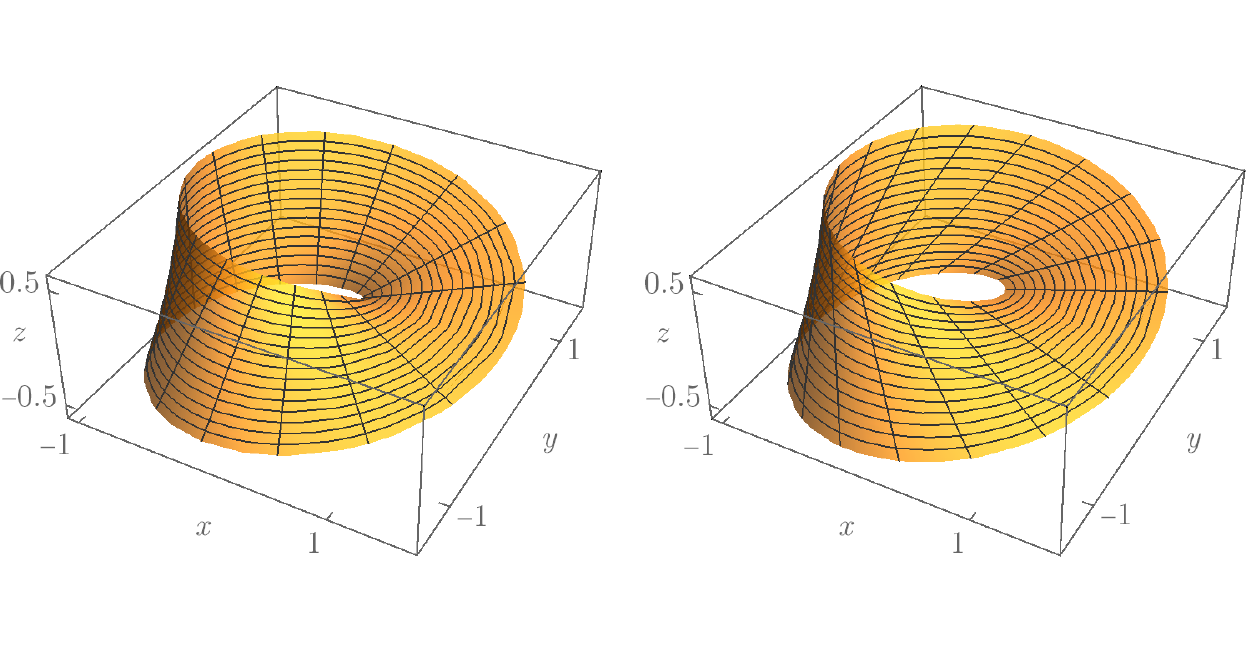}
	\label{fig:1}
	\caption{Two realizations of the M\"obius strip: the common one (left panel) and the simple one (right panel), both with 
$\de=3/5$.}  
\end{figure}


The``moving segment'' heuristics concerning the simple realization of the M\"obius strip is confirmed and detailed in 


\begin{theorem}\label{th:new}
The map $P_\de\ni(t,r)\mapsto\w(t,r)\in\w(P_\de)$ is a homeomorphism 
iff $\de\le\sqrt2$. 
\end{theorem}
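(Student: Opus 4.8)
The plan is to reduce the statement to a single injectivity question and then settle that by a short computation. The topology placed on $P_\de$ in Section~\ref{intro} makes it compact: $P_\de$ is the image of the compact rectangle $[0,2\pi]\times[-\de,\de]$ under the surjection that sends $(2\pi,r)$ to $(0,-r)$ and fixes every other point, and this surjection is continuous for the given topology. Since $\w(P_\de)\subset\R^3$ is Hausdorff, any continuous bijection $P_\de\to\w(P_\de)$ is automatically a homeomorphism, whereas a non-injective map is certainly not one. So I would only need to check: (a) the map $(t,r)\mapsto\w(t,r)$ is continuous on $P_\de$ for this topology; and (b) it is injective exactly when $\de\le\sqrt2$. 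For (a) I would observe that the formula \eqref{eq:new}, read on all of $\R\times[-\de,\de]$, defines a continuous map satisfying $\w(t+2\pi,-r)=\w(t,r)$, so it factors continuously through the quotient that produces the M\"obius topology (concretely, continuity at a seam point $(0,r)$ follows by applying ordinary continuity of the extended $\w$ at $(0,r)$ and at $(2\pi,-r)$ and using $\w(2\pi,-r)=\w(0,r)$). All the real content is in (b).

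For (b) I would set $\theta:=t/2$, so that $\w(t,r)=\big(\cos 2\theta+r\cos\theta,\ \sin 2\theta+r\sin\theta,\ r\sin\theta\big)$ and a point of $P_\de$ is recorded by the pair $(\theta,r)$ modulo the identification $(\theta,r)\sim(\theta+\pi,-r)$; geometrically, fixing $t$ traces the moving segment with midpoint $(\cos 2\theta,\sin 2\theta,0)$ and direction $(\cos\theta,\sin\theta,\sin\theta)$, so injectivity amounts to these segments being pairwise disjoint, each being clearly parametrized injectively by $r$. Suppose $\w(t_1,r_1)=\w(t_2,r_2)=(x,y,z)$. Subtracting the third coordinate from the second gives the key identity $y-z=\sin 2\theta$, so $\sin 2\theta_1=\sin 2\theta_2$, which forces $\theta_2\equiv\theta_1\pmod\pi$ or $\theta_2\equiv\tfrac\pi2-\theta_1\pmod\pi$. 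In the first case, comparing the remaining coordinates — the $z$-coordinate when $\sin\theta_1\ne0$, the $x$-coordinate when $\sin\theta_1=0$ — gives $(\theta_2,r_2)=(\theta_1+k\pi,(-1)^k r_1)$, i.e.\ the same point of $P_\de$, with no restriction on $\de$. In the second case I would choose representatives $\theta_1=\tfrac\pi4-\phi$, $\theta_2=\tfrac\pi4+\phi$ with $\phi\in(0,\tfrac\pi2)$ (the excluded $\phi\in\{0,\tfrac\pi2\}$ being precisely the values that fall under the first case), whereupon the three coordinate equations become a $2\times2$ linear system for $(r_1,r_2)$ with the unique solution $r_1=-\sqrt2\,(\cos\phi+\sin\phi)$, $r_2=-\sqrt2\,(\cos\phi-\sin\phi)$.

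Consequently a coincidence of the second (``nontrivial'') type exists iff $|r_1|\le\de$ and $|r_2|\le\de$ for some admissible $\phi$; since $|\cos\phi-\sin\phi|\le\cos\phi+\sin\phi$ on $(0,\tfrac\pi2)$, this reduces to $\sqrt2\,(\cos\phi+\sin\phi)\le\de$ for some $\phi\in(0,\tfrac\pi2)$. Because $\sqrt2\,(\cos\phi+\sin\phi)=2\sin\!\big(\phi+\tfrac\pi4\big)$ takes exactly the values in $(\sqrt2,2]$ as $\phi$ runs over $(0,\tfrac\pi2)$, such a $\phi$ exists if and only if $\de>\sqrt2$. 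Hence $(t,r)\mapsto\w(t,r)$ is injective on $P_\de$ iff $\de\le\sqrt2$, and together with (a) and the compact-to-Hausdorff remark this proves the theorem.

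The step I expect to be the main obstacle is the bookkeeping in the ``second case'': handling the flip ambiguity $(\theta,r)\sim(\theta+\pi,-r)$ so that a single parameter $\phi\in(0,\tfrac\pi2)$ genuinely enumerates all nontrivial coincidences (the subcase $\theta_1+\theta_2\equiv\tfrac{3\pi}2\pmod{2\pi}$, after shifting one $\theta_i$ by $\pi$, merges into the range $\phi\in(\tfrac\pi4,\tfrac\pi2)$), and verifying that each resulting pair $(\theta_1,r_1)\ne(\theta_2,r_2)$ is a pair of \emph{distinct} points of $P_\de$, so that for $\de>\sqrt2$ one really has a failure of injectivity rather than a relabeling. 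I would also have to dispose of the handful of degenerate sub-configurations where a denominator vanishes in solving the linear system — for instance $\sin\theta_1=0$ or $\phi=\tfrac\pi4$ — but these are routine boundary cases that do not affect the threshold $\sqrt2$.
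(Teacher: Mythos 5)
Your proposal is correct and takes essentially the same route as the paper: reduce the theorem to injectivity of $\w$ on $P_\de$ via compactness of $P_\de$ and continuity of $\w$ into a Hausdorff space, then classify coincidences by noting that the second and third coordinates force $\sin t_1=\sin t_2$, splitting into the trivial case and $t_1+t_2\in\{\pi,3\pi\}$, and solving a $2\times2$ linear system for $(r_1,r_2)$ whose solution must satisfy $|r_1|\vee|r_2|\le\de$, which yields the threshold $\sqrt2$. Your formulas $r_1=-\sqrt2\,(\cos\phi+\sin\phi)$, $r_2=-\sqrt2\,(\cos\phi-\sin\phi)$ are exactly the paper's $r_1=-2\cos\tfrac{t_1}2$, $r_2=(-1)^{k+1}2\sin\tfrac{t_1}2$ in the half-angle parametrization, the only difference being the bookkeeping ($\phi\in(0,\tfrac\pi2)$ versus the paper's $k\in\{0,1\}$), which you handle correctly.
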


For any point $p=(t,r)\in P_\de$, we use $\w(p)$ and $\w(t,r)$ interchangeably. 

The necessary proofs are deferred to Section~\ref{proofs}. 

Introduce the following notation:  
\begin{gather}
	Q_\de:=\big\{(p_1,p_2)\in P_\de^2\colon\   
	p_1\ne p_2, \  \w(p_1)=\w(p_2)\big\} \label{eq:Q}
\\
I_\de:=\{\w(p_1)\colon\  \exists p_2\ \, (p_1,p_2)\in Q_\de\}, \label{eq:I}
\end{gather} 
\begin{equation}\label{eq:c,s}
	c_\de:=1\wedge\frac{\de\vee\sqrt2}2,\quad s_\de:=2c_\de\sqrt{1-c_\de^2}. 
\end{equation}
So, the set $Q_\de$ is 
the set of all pairs of distinct points in $P_\de$ that are glued together by the map $\w$, whereas $I_\de$ is the image under $\w$ of the set of all points in $P_\de$ that are glued (by the map $\w$) together with a different point in $P_\de$. Thus, $I_\de$ may be referred to as the self-intersection set of the image $\w(P_\de)$ of $P_\de$ under the map $\w$.  

The proof of Theorem~\ref{th:new} is mainly based on the following proposition, which provides a detailed description of self-intersection properties of the simple M\"obius strip realization. 

\begin{proposition}\label{prop:new}
Take any points $p_1=(t_1,r_1)$ and $p_2=(t_2,r_2)$ in 
$[0,2\pi)\times\R$. Then $(p_1,p_2)\in Q_\de$ iff for some $k\in\{0,1\}$ one has 
\begin{equation}\label{eq:conds}
	t_2=(2k+1)\pi-t_1,\quad r_1=-2\cos\tfrac{t_1}2,\quad r_2=(-1)^{k+1}2\sin\tfrac{t_1}2,\quad 
	s_\de\le|\sin t_1|<1.  
\end{equation}
Moreover, 
\begin{equation}\label{eq:I=}
	I_\de=\{(-1,0,s)\colon s_\de\le|s|<1\}. 
\end{equation}
\end{proposition}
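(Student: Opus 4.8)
The plan is to first solve the equation $\w(p_1)=\w(p_2)$ for arbitrary $p_1,p_2\in[0,2\pi)\times\R$, then impose $p_1\ne p_2$, and finally impose $p_1,p_2\in P_\de$; the description of $I_\de$ will fall out of the description of $Q_\de$. Throughout write $c:=\cos\tfrac{t_1}2$ and $s:=\sin\tfrac{t_1}2$, so that $c^2+s^2=1$, $\cos t_1=c^2-s^2$ and $\sin t_1=2cs$.

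\emph{Solving $\w(p_1)=\w(p_2)$ up to $p_1=p_2$.} Subtracting the third coordinate equation $r_1s=r_2\sin\tfrac{t_2}2$ from the second one, $\sin t_1+r_1s=\sin t_2+r_2\sin\tfrac{t_2}2$, gives $\sin t_1=\sin t_2$; for $t_1,t_2\in[0,2\pi)$ this means $t_2=t_1$ or $t_2=(2k+1)\pi-t_1$ for the unique $k\in\{0,1\}$ with $(2k+1)\pi-t_1\in[0,2\pi)$. If $t_2=t_1$, the third coordinate equation gives $r_1=r_2$ when $s\ne0$, and the first gives $r_1=r_2$ when $s=0$ (i.e. $t_1=0$, $c=1$); so this case yields only $p_1=p_2$. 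Assume now $t_2=(2k+1)\pi-t_1$, so that $\cos t_2=-\cos t_1$, $\sin t_2=\sin t_1$, $\cos\tfrac{t_2}2=(-1)^ks$ and $\sin\tfrac{t_2}2=(-1)^kc$; substituting, the second and third coordinate equations both reduce to $r_1s=(-1)^kr_2c$, and the first to $2(c^2-s^2)+r_1c-(-1)^kr_2s=0$. Multiplying the last equation by $c$ and using $(-1)^kr_2c=r_1s$ to eliminate, we get $(c^2-s^2)(2c+r_1)=0$. If $c^2=s^2$ then $\cos t_1=0$, so $t_1\in\{\pi/2,3\pi/2\}$; in each of these two cases the forced $k$ makes $t_2=t_1$ and then $r_1=r_2$, so $p_1=p_2$. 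Otherwise $r_1=-2c$, and substituting this back yields $(-1)^kr_2=-2s$, i.e. $r_2=(-1)^{k+1}2s$. For the converse, given the first three relations of \eqref{eq:conds} for some $k\in\{0,1\}$, a direct substitution shows that $\w(p_1)$ and $\w(p_2)$ both equal $(-1,0,-\sin t_1)$ (the $x$- and $y$-coordinates collapsing to $-1$ and $0$). Hence, for $p_1,p_2\in[0,2\pi)\times\R$, one has $\w(p_1)=\w(p_2)$ and $p_1\ne p_2$ iff the first three relations of \eqref{eq:conds} hold for some $k\in\{0,1\}$ and $t_1\notin\{\pi/2,3\pi/2\}$, i.e. $|\sin t_1|<1$.

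\emph{Imposing $p_1,p_2\in P_\de$.} Under the first three relations of \eqref{eq:conds}, $|r_1|=2|c|$ and $|r_2|=2|s|$, so $(|r_1|\vee|r_2|)^2=4(c^2\vee s^2)=2(1+|\cos t_1|)$, and therefore $p_1,p_2\in P_\de$ iff $|\cos t_1|\le\tfrac{\de^2}2-1$. A short case check on $\de$ — using $c_\de=1\wedge\tfrac{\de\vee\sqrt2}2$, $s_\de=2c_\de\sqrt{1-c_\de^2}$, and the identity $1-\big(\tfrac{\de^2}2-1\big)^2=\de^2\big(1-\tfrac{\de^2}4\big)=s_\de^2$ for $\sqrt2\le\de\le2$ (with $|\cos t_1|\le\tfrac{\de^2}2-1$ impossible when $\de<\sqrt2$, where $s_\de=1$, and automatic when $\de>2$, where $s_\de=0$) — shows that $|\cos t_1|\le\tfrac{\de^2}2-1$ together with $|\sin t_1|<1$ is equivalent to $s_\de\le|\sin t_1|<1$. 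Combining this with the previous paragraph gives the stated description of $Q_\de$.

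\emph{The self-intersection set, and the main obstacle.} For $(p_1,p_2)\in Q_\de$ we have just seen that $\w(p_1)=(-1,0,-\sin t_1)$; and by the description of $Q_\de$, as $(p_1,p_2)$ runs over $Q_\de$ the number $t_1$ runs over exactly $\{t\in[0,2\pi)\colon s_\de\le|\sin t|<1\}$ (for each such $t$, take the matching $k$, $t_2=(2k+1)\pi-t$ and $r_2=(-1)^{k+1}2\sin\tfrac t2$), so $-\sin t_1$ runs over exactly $\{\tau\in\R\colon s_\de\le|\tau|<1\}$, which is \eqref{eq:I=}. The only genuinely delicate point is the case analysis in the middle step matching $p_1,p_2\in P_\de$ with $s_\de\le|\sin t_1|$ for the piecewise-defined $c_\de$; the rest is routine computation with half-angle identities, plus the easy degenerate subcases $t_1\in\{0,\pi/2,\pi,3\pi/2\}$.
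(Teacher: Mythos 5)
Your proof is correct and follows essentially the same route as the paper: reduce $\w(p_1)=\w(p_2)$ to $\sin t_1=\sin t_2$, split into $t_2=t_1$ versus $t_2=(2k+1)\pi-t_1$, solve the resulting linear equations for $r_1,r_2$ (the paper via the determinant $(-1)^{k+1}\cos t_1$, you by elimination, excluding $t_1\in\{\pi/2,3\pi/2\}$ either way), translate $|r_1|\vee|r_2|\le\de$ into $s_\de\le|\sin t_1|<1$, and read off $I_\de$ from $\w(p_1)=(-1,0,-\sin t_1)$. The only gloss is that ``substituting back'' to get $r_2=(-1)^{k+1}2\sin\tfrac{t_1}2$ divides by $\cos\tfrac{t_1}2$, so the case $t_1=\pi$ needs the first-coordinate equation instead --- one of the degenerate subcases you already flag, and easily filled.
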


\medskip

In view of the definitions in \eqref{eq:c,s}, we always have $s_\de\le1$, and 
$s_\de=1$ iff $\de\le\sqrt2$. So, by \eqref{eq:I=}, for the self-intersection set $I_\de$ we have this: 
\begin{equation}\label{eq:I=empty}
	I_\de=\emptyset\iff\de\le\sqrt2. 
\end{equation}
One may also note at this point that, if $\de\ge2$, then $s_\de=0$ and hence the self-intersection set $I_\de$ in this case is the entire open interval between the points $(-1,0,-1)$ and $(-1,0,1)$ in $\R^3$.

Viewing the entire image $\w(P_\de)$ of the M\"obius strip, 
it is a bit problematic to see how it looks like  
near the self-intersection set $I_\de$, because that is covered by folds of the surface. 
For instance, see Figure~2, 
which is the version of Figure~1 
with $\de=1.97$ instead of $\de=3/5$. 
\begin{figure}[h]
	\centering
		\includegraphics[width=.90\textwidth]{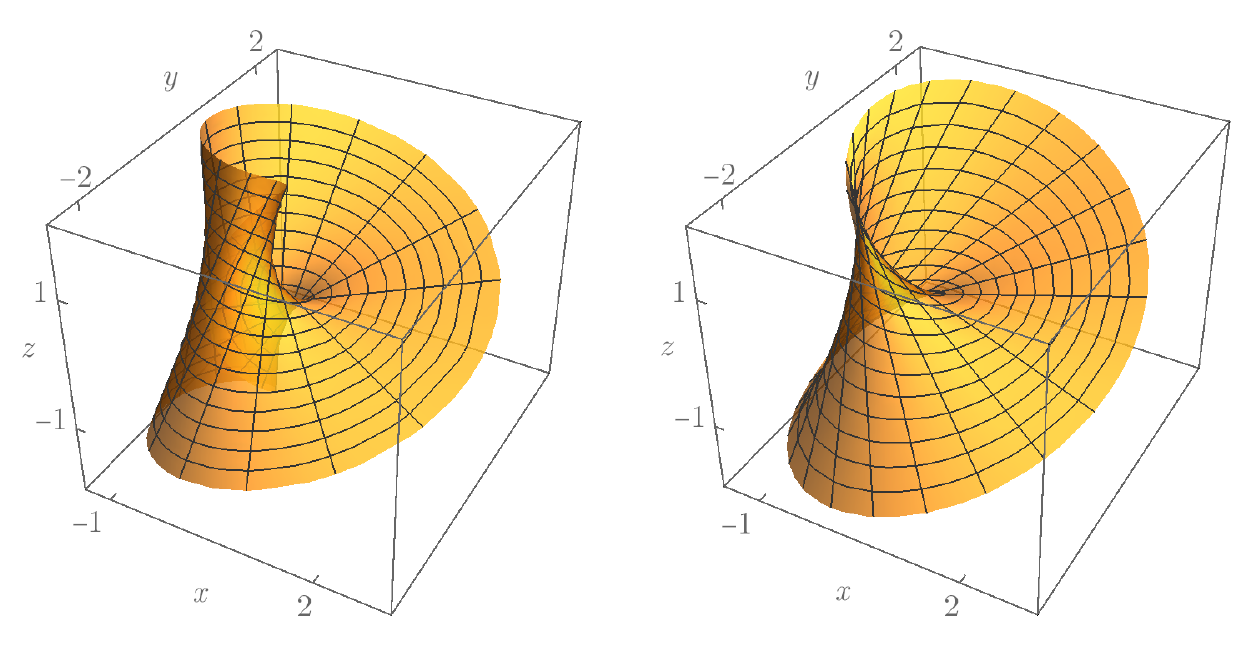}
	\caption{Two realizations of the M\"obius strip: the common one (left panel) and the simple one (right panel), both with 
$\de=1.97$.}  
\end{figure}

Therefore, let us prepare appropriate parts of the surface for better viewing of the self-intersection set $I_\de$ and its neighborhood. 
Toward that end, suppose that $\de>\sqrt 2$, for the self-intersection set $I_\de$ to be nonempty.  
Next, let 
\begin{equation*}
	h_2:=h_2(\de):=\arcsin\tfrac{\de\wedge2}2\in\big(\tfrac\pi4,\tfrac\pi2\big]\quad\text{and}\quad h_3:=h_3(\de):=\pi-h_2\in\big[\tfrac\pi2,\tfrac{3\pi}4\big). 
\end{equation*}
Consider then the following parts of the image $\w(P_\de)$ of the M\"obius strip $P_\de$ under map $\w$: 
\begin{equation*}
\begin{aligned}
	S_{1,\mathsf{bot}}&:=\big\{\w(t_1,r)\colon t_1\in[\tfrac\pi2,2h_2],\,
	r\in[-2\cos\tfrac\pi4,-2\cos h_2]\big\}, \\ 
	S_{1,\mathsf{top}}&:=\big\{\w(t_1,r)\colon t_1\in[2h_3,\tfrac{3\pi}2],\,
	r\in[-2\cos h_3,-2\cos\tfrac{3\pi}4]\big\}, \\ 
	S_{2,\mathsf{bot}}&:=\big\{\w(t_2,r)\colon t_2\in[\pi-2h_2,\tfrac\pi2],\,
	r\in[-2\sin h_2,-2\sin\tfrac\pi4]\big\}, \\ 	
	S_{2,\mathsf{top}}&:=\big\{\w(t_2,r)\colon t_2\in[\tfrac{3\pi}2,3\pi-2h_3),\,
	r\in[2\sin\tfrac{3\pi}4,2\sin h_3]\big\}. 	
\end{aligned}	
\end{equation*}
Concerning the ranges of $t_1$, $t_2$, and $r$ in the above display, note that 
\begin{align*} 
[\tfrac\pi2,2h_2]
	\times[-2\cos\tfrac\pi4,-2\cos h_2]
	& \subset(0,\pi]\times[-\sqrt2,0]\subset[0,2\pi)\times[-\de,\de]=P_\de, \\ 
[2h_3,\tfrac{3\pi}2]
	\times[-2\cos h_3,-2\cos\tfrac{3\pi}4]
	& \subset[\pi,2\pi)\times[0,\sqrt2]\subset[0,2\pi)\times[-\de,\de]=P_\de, \\ 
[\pi-2h_2,\tfrac\pi2]
	\times[-2\sin h_2,-2\sin\tfrac\pi4]
	& \subset[0,\pi)\times[-\de,-\sqrt2]\subset[0,2\pi)\times[-\de,\de]=P_\de, \\ 	
[\tfrac{3\pi}2,3\pi-2h_3)
	\times[2\sin\tfrac{3\pi}4,2\sin h_3]
	& \subset(\pi,2\pi)\times[\sqrt2,\de]\subset[0,2\pi)\times[-\de,\de]=P_\de.   	
\end{align*}	
So, each of the surfaces $S_{1,\mathsf{bot}},S_{1,\mathsf{top}},S_{2,\mathsf{bot}},S_{2,\mathsf{top}}$ is indeed a part of the image $\w(P_\de)$ of the M\"obius strip $P_\de$ under map $\w$. 

Now we are ready to illustrate Proposition~\ref{prop:new}, which is done in Figure~2. 

\begin{figure}[h]
	\centering
		\includegraphics[width=1\textwidth]{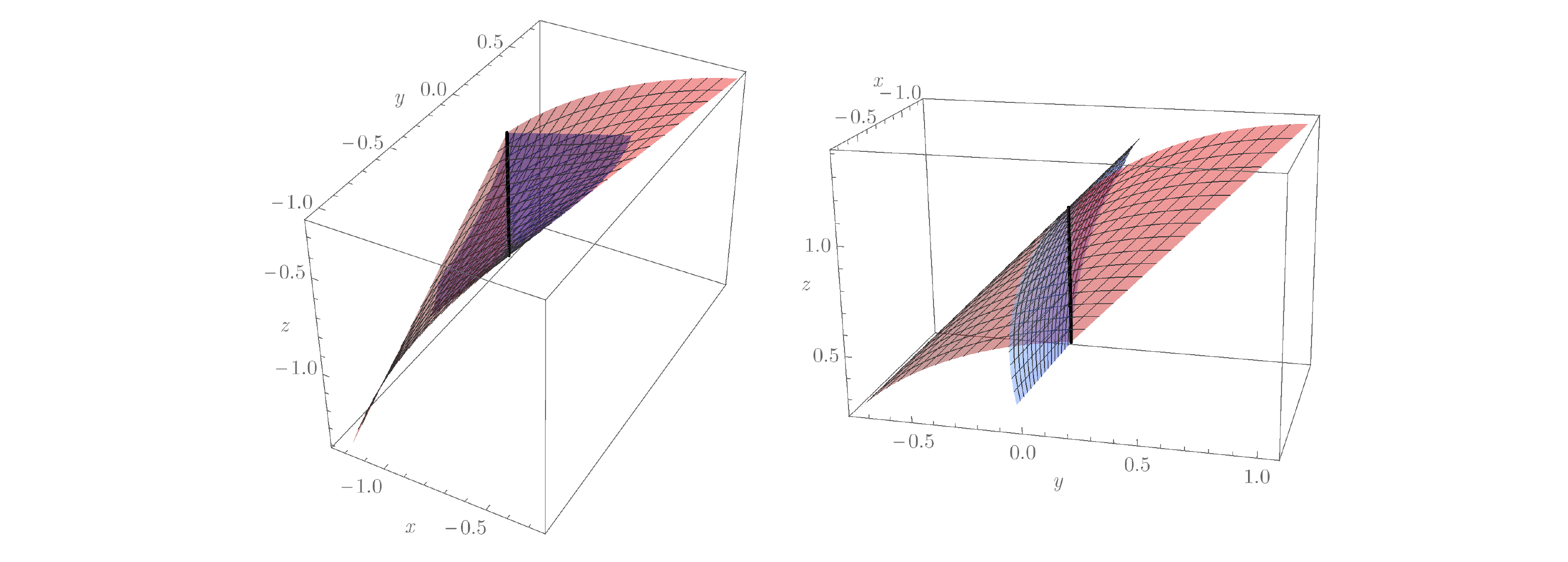}
	\label{fig:self-intersect}
	\caption{Parts of the image $\w(P_\de)$ of the M\"obius strip $P_\de$ under map $\w$ with $\de=1.97$. 
	Left panel: surfaces $S_{1,\mathsf{bot}}$ (red) and $S_{2,\mathsf{bot}}$ (blue), and the bottom half $\{(-1,0,s)\colon s_\de\le-s<1\}$ (thick, black) of the self-intersection set $I_\de$. 
	Right panel: surfaces $S_{1,\mathsf{top}}$ (red) and $S_{2,\mathsf{top}}$ (blue), and the top half $\{(-1,0,s)\colon s_\de\le s<1\}$ (thick, black) of the self-intersection set $I_\de$. 
	Here, $s(\de)=s(1.97)\approx0.34$. 
	}  
\end{figure}

In view of the above results and, in particular, Figure~2, 
it may seem unlikely that the simple M\"obius strip realization is the closure of the graph of a rational function on a polynomially defined subset of $\R^2$. Yet, we have 



\begin{theorem}\label{th:xyz}
For any real $\de>0$, the image $\w(P_\de)$ of the M\"obius strip $P_\de$ under the map $\w$ coincides with the topological closure of the graph of a rational function $f\colon S_\de\to\R$ on a polynomially defined set $S_\de$, where 
\begin{equation}\label{eq:f}
	f(x,y):=y-\frac{2(x+1)y}{(x+1)^2+y^2}=y\,\frac{x^2+y^2-1}{(x+1)^2+y^2}
\end{equation}
for all $(x,y)\in S_\de$,  
\begin{equation}\label{eq:g}
	S_\de:=\big\{(x,y)\in\R^2\setminus\{(-1,0)\}\colon g(x,y)\le\de^2\big\},\quad\text{and}\quad
	g(x,y):=\frac{(x^2 + y^2 - 1)^2}{(x + 1)^2 + y^2}.  
\end{equation}
\end{theorem}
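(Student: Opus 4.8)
The plan is to establish the set equality $\w(P_\de)=\cl\big(\gr(f|_{S_\de})\big)$, where $\gr(f|_{S_\de})=\{(x,y,f(x,y))\colon(x,y)\in S_\de\}\subset\R^3$; that $f$ is rational on $S_\de$ and that $S_\de$ is cut out by polynomial (in)equalities (clear the positive denominator of $g$ and exclude $(-1,0)$ by $(x+1)^2+y^2>0$) need no argument. I would prove the two inclusions separately, reducing ``$\cl\big(\gr(f|_{S_\de})\big)\subseteq\w(P_\de)$'' to the two facts that $\gr(f|_{S_\de})\subseteq\w(P_\de)$ and that $\w(P_\de)$ is closed.

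Everything rests on a half-angle substitution. Writing $c:=\cos\tfrac t2$, $s:=\sin\tfrac t2$, $u:=2c+r$ and using $\cos t=2c^2-1$, $\sin t=2cs$, one finds that $(x,y,z):=\w(t,r)$ satisfies $x+1=cu$, $y=su$, $z=rs$; hence $(x+1)^2+y^2=u^2$ and $x^2+y^2-1=u^2-2cu=ur$. Consequently $(x,y)=(-1,0)$ iff $u=0$, and whenever $u\ne0$ one has $g(x,y)=(ur)^2/u^2=r^2$ and $f(x,y)=su-2(cu)(su)/u^2=s(u-2c)=rs=z$. So on $P_\de$ the map $\w$ literally returns the point $(x,y,f(x,y))$ whenever $u\ne0$, and then $g(x,y)\le\de^2$ holds automatically because $|r|\le\de$; this single identity drives both inclusions.

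For $\w(P_\de)\subseteq\cl\big(\gr(f|_{S_\de})\big)$: if $(t,r)\in P_\de$ has $u\ne0$, then $\w(t,r)=(x,y,f(x,y))$ with $g(x,y)=r^2\le\de^2$, so $\w(t,r)\in\gr(f|_{S_\de})$; if $u=0$, i.e.\ $r=-2\cos\tfrac t2$, then $\w(t,r)=(-1,0,z)$, and taking $r_n\in[-\de,\de]\setminus\{r\}$ with $r_n\to r$ (possible since $\de>0$), the points $\w(t,r_n)$ have $u\ne0$, hence lie in $\gr(f|_{S_\de})$, and converge to $\w(t,r)$ by continuity. For $\gr(f|_{S_\de})\subseteq\w(P_\de)$: given $(x,y)\in S_\de$ I would reconstruct a preimage by choosing $u$ with $u^2=(x+1)^2+y^2$ of the sign that makes $\big((x+1)/u,\,y/u\big)=(\cos\tfrac t2,\sin\tfrac t2)$ for a unique $\tfrac t2\in[0,\pi)$, and setting $r:=u-2c$; then $r^2=g(x,y)\le\de^2$, so $(t,r)\in P_\de$ and $\w(t,r)=(x,y,f(x,y))$. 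Finally, since $\w(2\pi,r)=(1-r,0,0)=\w(0,-r)$, we have $\w(P_\de)=\w\big([0,2\pi]\times[-\de,\de]\big)$, a continuous image of a compact set, hence closed; this upgrades $\gr(f|_{S_\de})\subseteq\w(P_\de)$ to $\cl\big(\gr(f|_{S_\de})\big)\subseteq\w(P_\de)$, and together with the first inclusion we conclude $\w(P_\de)=\cl\big(\gr(f|_{S_\de})\big)$.

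The one step I expect to require care is the preimage reconstruction: the sign of $u$ must be chosen so that the recovered half-angle $\tfrac t2$ lands in $[0,\pi)$ rather than in $[\pi,2\pi)$, and the degenerate slice $y=0$ (where one takes $t=0$, $r=x-1$, forcing $c=1$) should be handled by hand. After that, the membership $(t,r)\in P_\de$ is immediate from $g=r^2$, and the rest is bookkeeping around the half-angle identities $g=r^2$ and $f=z$.
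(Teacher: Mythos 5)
Your proof is correct, and its overall skeleton --- identify $\w(P_\de)$ away from the exceptional vertical line $\ell=\{(-1,0,z)\colon z\in\R\}$ with the graph $G(f)$ over $S_\de$, then use closedness of $\w(P_\de)$ plus an approximation step to absorb the points on $\ell$ --- matches the paper's, which packages the graph identification as Proposition~\ref{prop:xyz} ($\w(P_\de)\setminus\ell=G(f)$) and then deduces Theorem~\ref{th:xyz} by a density argument. Where you genuinely differ is the computational engine. The paper recovers $t$ from $(x,y)$ via $t=2\arccot\frac{x+1}{y}$ and checks $f(x,y)=z$ and $r^2=g(x,y)$ through half-angle/cotangent manipulations, with a separate case $\sin\tfrac t2=0$; your substitution $c=\cos\tfrac t2$, $s=\sin\tfrac t2$, $u=2c+r$, which gives $x+1=cu$, $y=su$, $z=rs$, hence $(x+1)^2+y^2=u^2$ and $x^2+y^2-1=ur$, turns both identities $f(x,y)=z$ and $g(x,y)=r^2$ into one-line polynomial computations valid exactly when $u\ne0$ (i.e.\ off $\ell$), and the same $u$ drives the preimage construction in the reverse inclusion (choose $\operatorname{sign}u=\operatorname{sign}y$, set $r=u-2c$, with the $y=0$ slice done by $t=0$, $r=x-1$), replacing the paper's $\arccot$ bookkeeping. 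Your approximation step is also uniform --- perturbing $r$ away from $-2\cos\tfrac t2$ handles every point of $\w(P_\de)\cap\ell$ at once --- whereas the paper splits into the cases $r=0$ (perturb $t$) and $r\ne0$ (scale $r$); and your closedness argument via $\w\big([0,2\pi]\times[-\de,\de]\big)=\w(P_\de)$ is an elementary substitute for the paper's appeal to compactness of the M\"obius strip in its quotient topology. The two delicate points you flagged (the sign choice for $u$ and the degenerate slice $y=0$) are exactly the right ones and are handled correctly, so I see no gaps.
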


Clearly, the set $S_\de$ is increasing in $\de$, and $\bigcup_{\de>0}S_\de=\R^2\setminus\{(-1,0)\}$. In Figure~\ref{fig:S_de}, one can see the sets $S_\de$ with $\de\in\{1/2, 1, \sqrt2, 2, 3\}$. 

\begin{figure}[h]%
\includegraphics[width=.5\textwidth]{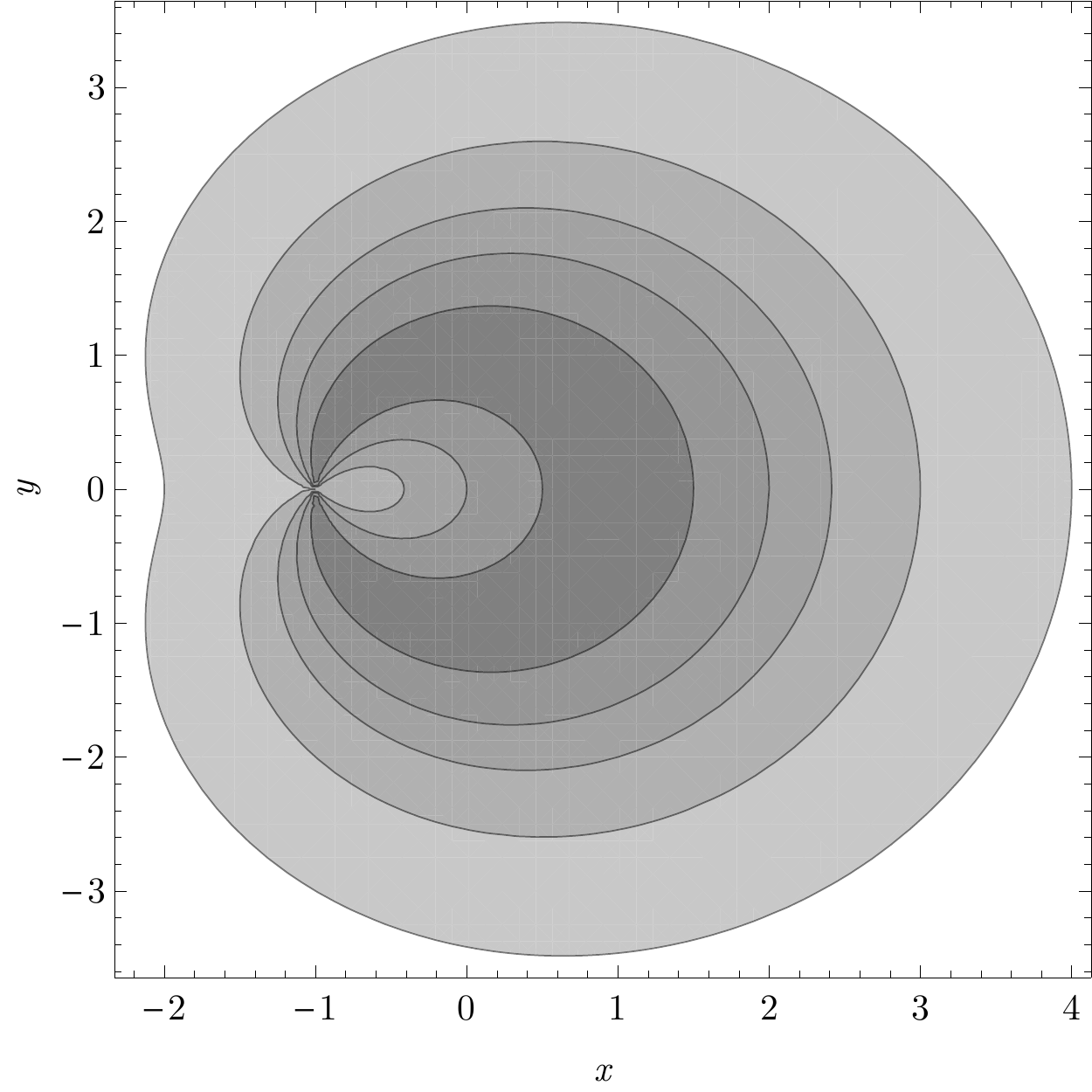}%
\caption{The increasing sets $S_\de$ for $\de\in\{\tfrac12, 1, \sqrt2, 2, 3\}$, from the darkest for $\de=\tfrac12$ to the lightest for $\de=3$.}%
\label{fig:S_de}%
\end{figure}

The proof of Theorem~\ref{th:xyz} is based on 

\begin{proposition}\label{prop:xyz}
One has 
\begin{equation*}
	\w(P_\de)\setminus\ell=G(f), 
\end{equation*}
where $\ell:=\{(-1,0,z)\colon z\in\R\}$ is the vertical line through point $(-1,0,0)$ in $\R^3$ and 
\begin{equation}\label{eq:G}
	G(f):=\big\{\big(x,y,f(x,y)\big)\colon(x,y)\in S_\de\big\}
\end{equation}
is the graph of the function $f$. 
\end{proposition}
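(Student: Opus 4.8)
The plan is to trade the parameters $(t,r)$ for the three quantities $c:=\cos\tfrac t2$, $s:=\sin\tfrac t2$, and the auxiliary quantity $\rho:=2\cos\tfrac t2+r$, and to rewrite $\w$ accordingly. Using $\cos t=2c^2-1$ and $\sin t=2cs$, a short computation shows that the coordinates $(x,y,z)=\w(t,r)$ satisfy $x+1=c\rho$ and $y=s\rho$, whence $(x+1)^2+y^2=\rho^2$; in particular $(x,y)=(-1,0)$ iff $\rho=0$, i.e.\ $\w(t,r)\notin\ell$ iff $\rho\ne0$. For the third coordinate, $z=rs=(\rho-2c)s=\rho s-2cs$, and substituting $s=y/\rho$ and $c=(x+1)/\rho$ (legitimate when $\rho\ne0$) gives exactly $z=y-\tfrac{2(x+1)y}{(x+1)^2+y^2}=f(x,y)$. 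Finally, from $x^2+y^2-1=(x+1)^2+y^2-2(x+1)=\rho^2-2c\rho=\rho r$ one obtains $g(x,y)=(\rho r)^2/\rho^2=r^2$.

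Granting these identities, the inclusion $\w(P_\de)\setminus\ell\subseteq G(f)$ is immediate: if $\w(t,r)\notin\ell$ with $(t,r)\in P_\de$, then $(x,y)\ne(-1,0)$, $g(x,y)=r^2\le\de^2$ so that $(x,y)\in S_\de$, and $z=f(x,y)$; hence $\w(t,r)\in G(f)$.

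For the reverse inclusion $G(f)\subseteq\w(P_\de)\setminus\ell$ I would start from a point $(x,y)\in S_\de$ (so $(x,y)\ne(-1,0)$ and $g(x,y)\le\de^2$) and produce a preimage $(t,r)\in P_\de$. If $y\ne0$, set $\rho:=(\sign y)\sqrt{(x+1)^2+y^2}$ (so $\rho\ne0$), $c:=(x+1)/\rho$, $s:=y/\rho>0$; since $c^2+s^2=1$ and $s>0$ there is a unique $\theta\in(0,\pi)$ with $\cos\theta=c$ and $\sin\theta=s$, and one puts $t:=2\theta\in(0,2\pi)$ and $r:=\rho-2c$. If $y=0$, put $t:=0$ and $r:=x-1$ (allowed since $x\ne-1$). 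In either case a direct substitution into $\w$ yields $\w(t,r)=(x,y,f(x,y))$; the relation $g(x,y)=r^2$ (either by the computation above or directly when $y=0$) together with $g(x,y)\le\de^2$ gives $|r|\le\de$, so that $(t,r)\in P_\de$; and $(x,y)\ne(-1,0)$ puts the point off $\ell$. Thus $(x,y,f(x,y))\in\w(P_\de)\setminus\ell$.

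The computations are all elementary, so the only points requiring care are: (i) spotting the substitution $\rho=2\cos\tfrac t2+r$, which makes the relations between $(x,y)$ and $(c,s,r)$ affine and hence invertible; and (ii) in the reverse direction, choosing the sign of $\rho$ to match that of $y$ so that $s=\sin\tfrac t2\ge0$ (as is forced by $\tfrac t2\in[0,\pi)$), and handling the degenerate line $y=0$ separately.
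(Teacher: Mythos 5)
Your proposal is correct; I checked the key identities ($x+1=c\rho$, $y=s\rho$, $(x+1)^2+y^2=\rho^2$, $x^2+y^2-1=\rho r$, hence $g(x,y)=r^2$ and $z=f(x,y)$ whenever $\rho\ne0$) and both inclusions go through, including the sign choice $\rho=(\sign y)\sqrt{(x+1)^2+y^2}$ needed to keep $\sin\tfrac t2\ge0$ and the separate treatment of $y=0$ via $(t,r)=(0,x-1)$. The overall plan is the same as the paper's -- invert the parametrization over the $(x,y)$-plane and show that off $\ell$ the fiber is the single point $z=f(x,y)$ with $r^2=g(x,y)\le\de^2$ -- but your execution differs in a useful way. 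The paper works trigonometrically: it derives $\cot\tfrac t2=\tfrac{x+1}{y}$, sets $t=2\arccot\tfrac{x+1}{y}$, splits the forward direction into the cases $\sin\tfrac t2=0$ and $\sin\tfrac t2\ne0$, and in the reverse direction only shows $\w(t,r)=(x,y,\tilde z)$ for some $\tilde z$, concluding $\tilde z=z$ indirectly by citing the already-proved forward inclusion and the fact that $f$ is a function. Your substitution $\rho=2\cos\tfrac t2+r$ linearizes the relations between $(x,y,z)$ and $(c,s,r)$, which buys you a uniform forward argument (no case split, since $\rho\ne0$ is exactly the condition of being off $\ell$), a transparent reason why $g(x,y)=r^2$, and a direct verification of the $z$-coordinate in the reverse direction, at the cost of not exhibiting the explicit inversion formula $t=2\arccot\tfrac{x+1}{y}$ that the paper's version makes visible. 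Either route is complete; yours is arguably the cleaner computation.
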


Proposition~\ref{prop:xyz} is complemented by 


\begin{proposition}\label{prop:ell}
One has 
\begin{equation}\label{eq:J}
	\w(P_\de)\cap\ell=J_\de:=\{(-1,0,z)\colon|z|\le\si_\de\},  
\end{equation}
where 
\begin{equation}\label{eq:si}
	\si_\de:=2b_\de\sqrt{1-b_\de^2}\quad\text{and}\quad b_\de:=\frac{\de\wedge\sqrt2}2. 
\end{equation}
\end{proposition}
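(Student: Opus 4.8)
The plan is to compute directly which points of the image $\w(P_\de)$ lie on the vertical line $\ell$. A point $\w(t,r)=(\cos t+r\cos\tfrac t2,\ \sin t+r\sin\tfrac t2,\ r\sin\tfrac t2)$ lies on $\ell$ precisely when its first two coordinates equal $-1$ and $0$ respectively. So the first step is to solve the system
\begin{equation*}
	\cos t+r\cos\tfrac t2=-1,\qquad \sin t+r\sin\tfrac t2=0
\end{equation*}
for $(t,r)\in[0,2\pi)\times[-\de,\de]$. Using the double-angle identities $\cos t=2\cos^2\tfrac t2-1$ and $\sin t=2\sin\tfrac t2\cos\tfrac t2$, the first equation becomes $2\cos^2\tfrac t2+r\cos\tfrac t2=0$, i.e. $\cos\tfrac t2\,(2\cos\tfrac t2+r)=0$, and the second becomes $\sin\tfrac t2\,(2\cos\tfrac t2+r)=0$. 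Since $\cos\tfrac t2$ and $\sin\tfrac t2$ cannot vanish simultaneously, both equations hold iff $r=-2\cos\tfrac t2$.

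The second step is to read off the $z$-coordinate along this solution set. Substituting $r=-2\cos\tfrac t2$ into $z=r\sin\tfrac t2$ gives $z=-2\cos\tfrac t2\sin\tfrac t2=-\sin t$. Hence
\begin{equation*}
	\w(P_\de)\cap\ell=\big\{(-1,0,-\sin t)\colon t\in[0,2\pi),\ -2\cos\tfrac t2\in[-\de,\de]\big\}.
\end{equation*}
The constraint $|2\cos\tfrac t2|\le\de$, i.e. $|\cos\tfrac t2|\le\tfrac\de2$, is automatically satisfied when $\de\ge2$; when $\de<2$ it restricts $\tfrac t2$ to a union of intervals around $\tfrac\pi2$ (and, correspondingly, $t$ near $\pi$), and one must check exactly which values of $-\sin t$ are then attained.

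The third step is therefore the bookkeeping: as $t$ ranges over $[0,2\pi)$ subject to $|\cos\tfrac t2|\le b_\de=\tfrac{\de\wedge\sqrt2}{2}$ (note $\tfrac\de2\wedge1$ would be the bound from $|\cos\tfrac t2|\le1$ as well, but here the relevant threshold turns out to involve $b_\de$), determine the range of $-\sin t$. Writing $u=\tfrac t2\in[0,\pi)$, we need the image of $\{u\colon|\cos u|\le b_\de\}$ under $u\mapsto-\sin 2u=-2\sin u\cos u$. By symmetry considerations ($u\leftrightarrow\pi-u$ changes the sign of $\cos u$ but not of $\sin u$, while it sends $-\sin 2u$ to $+\sin 2u$), the attained set of $z$-values is symmetric about $0$, so it suffices to find its maximum. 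On the relevant range, $|z|=2|\sin u||\cos u|=2\sqrt{1-\cos^2 u}\,|\cos u|$; the function $c\mapsto 2c\sqrt{1-c^2}$ on $[0,1]$ is increasing on $[0,\tfrac1{\sqrt2}]$ and decreasing on $[\tfrac1{\sqrt2},1]$, with maximum $1$ at $c=\tfrac1{\sqrt2}$. Since $|\cos u|$ ranges over $[0,b_\de]$ and $b_\de\le\tfrac1{\sqrt2}$, the maximum of $|z|$ is $2b_\de\sqrt{1-b_\de^2}=\si_\de$, and all intermediate values in $[0,\si_\de]$ are attained by continuity. This yields exactly $\w(P_\de)\cap\ell=\{(-1,0,z)\colon|z|\le\si_\de\}=J_\de$.

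The only mildly delicate point — the main obstacle, such as it is — is getting the endpoint arithmetic right: verifying that the bound $|\cos\tfrac t2|\le\tfrac\de2$ effectively becomes $|\cos\tfrac t2|\le b_\de$ once combined with the automatic bound $|\cos\tfrac t2|\le1$ and the location of the critical point $\tfrac1{\sqrt2}$ of $c\mapsto2c\sqrt{1-c^2}$ (so that for $\de>\sqrt2$ the constraint is slack at the maximizing value and $\si_\de=1$, consistent with $\si_\de=s_\de$ when $s_\de$ is defined via the same formula), and confirming that the half-open nature of $[0,2\pi)$ in $P_\de$ does not cost us any $z$-value — it does not, since $t$ and $t'=2\pi+$(something) are not both needed and the closed interval $[0,\si_\de]$ is attained already within $(0,2\pi)$. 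Everything else is a direct computation.
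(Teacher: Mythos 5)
Your proposal is correct and follows essentially the same route as the paper: reduce $\w(t,r)\in\ell$ to $r=-2\cos\tfrac t2$ (hence $z=-\sin t$ with $2|\cos\tfrac t2|\le\de$), then identify the attained $z$-values as $[-\si_\de,\si_\de]$. The only difference is presentational — you factor both equations via double-angle identities instead of splitting off the $\sin\tfrac t2=0$ case, and you spell out the range computation that the paper records as \eqref{eq:iff t} without detailed proof.
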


Clearly, it follows from \eqref{eq:I} and \eqref{eq:I=} that $I_\de\subseteq\w(P_\de)\cap\ell=J_\de$. Here one may also note that $\si_\de=1$ if $\de\ge\sqrt2$. 

Propositions~\ref{prop:new} and \ref{prop:ell} are illustrated in Figure~
5. 

\begin{figure}[h]
	\centering
		\includegraphics[width=.75\textwidth]{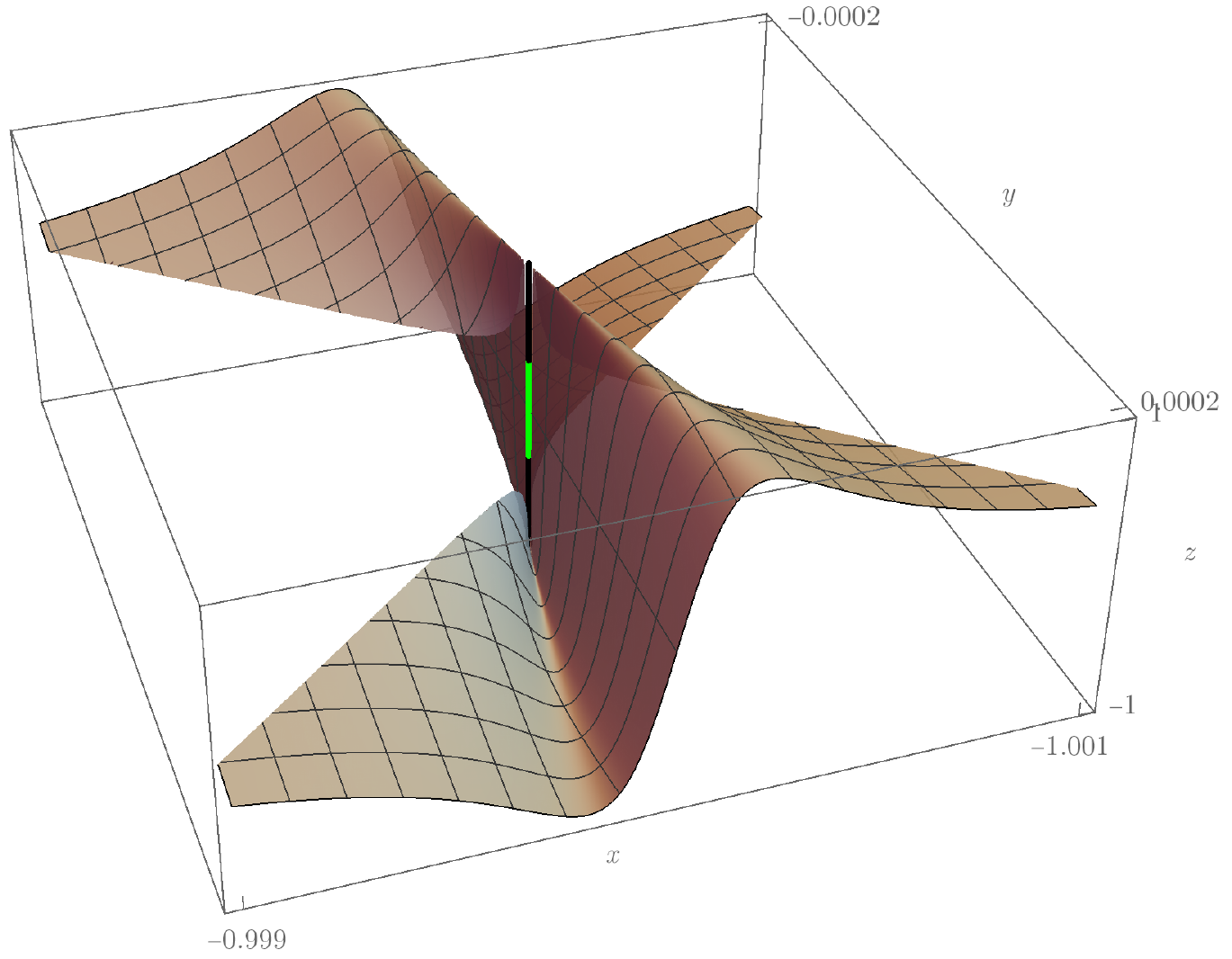}
	\label{fig:xyz}
	\caption{The part of the simple realization of the M\"obius strip with $\de=1.97$ in the neighborhood $[-1-\vp,-1+\vp]\times[-0.2\vp,0.2\vp]\times[-1-5\vp,1+5\vp]$ of the set $J_\de$, with $\vp=0.001$. Also shown are the sets $I_\de$ (the union of the two black vertical segments) and $J_\de\setminus I_\de$ (the green vertical segment); here $s_\de\approx0.34$ and $\si_\de=1$, so that $J_\de$ is the entire vertical segment from point $(-1,0,-1)$ to point $(-1,0,1)$.}  
\end{figure}

Immediately from Propositions~\ref{prop:xyz} and \ref{prop:ell}, we obtain the following complete description of $\w(P_\de)$: 

\begin{corollary}\label{cor:w(P)}
$\w(P_\de)=G(f)\cup\{(-1,0,z)\colon|z|\le\si_\de\}$. 
\end{corollary}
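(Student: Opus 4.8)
The plan is to deduce Corollary~\ref{cor:w(P)} directly from Propositions~\ref{prop:xyz} and \ref{prop:ell} by a simple set-theoretic decomposition of $\w(P_\de)$ along the vertical line $\ell$. First I would write
\[
\w(P_\de)=\big(\w(P_\de)\setminus\ell\big)\ \cup\ \big(\w(P_\de)\cap\ell\big),
\]
which is a tautology since $\ell$ and its complement partition $\R^3$. By Proposition~\ref{prop:xyz}, the first piece equals $G(f)$; by Proposition~\ref{prop:ell}, the second piece equals $J_\de=\{(-1,0,z)\colon|z|\le\si_\de\}$. Substituting these two identities gives $\w(P_\de)=G(f)\cup\{(-1,0,z)\colon|z|\le\si_\de\}$, which is exactly the claimed formula.

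The only point that requires a word of care is that the decomposition $\w(P_\de)=\big(\w(P_\de)\setminus\ell\big)\cup\big(\w(P_\de)\cap\ell\big)$ is valid for \emph{any} subset of $\R^3$ and any line $\ell$, so no further verification is needed there; the union on the right-hand side of the corollary need not be disjoint (indeed $G(f)$ may meet $\ell$, but that does not affect the equality). Thus there is essentially no obstacle: the corollary is an immediate consequence of the two preceding propositions, and the proof is one line once those are in hand. The substantive content has already been discharged in establishing Proposition~\ref{prop:xyz} (identifying the off-$\ell$ part of the surface with the graph of the rational function $f$ over $S_\de$) and Proposition~\ref{prop:ell} (computing the intersection with $\ell$).

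In the write-up I would simply state: by Proposition~\ref{prop:xyz}, $\w(P_\de)\setminus\ell=G(f)$, and by Proposition~\ref{prop:ell}, $\w(P_\de)\cap\ell=\{(-1,0,z)\colon|z|\le\si_\de\}$; since $\w(P_\de)=\big(\w(P_\de)\setminus\ell\big)\cup\big(\w(P_\de)\cap\ell\big)$, the two displayed identities combine to yield the assertion. I expect the main (in fact only) ``obstacle'' to be purely expository — making clear that nothing more than the trivial partition along $\ell$ is being used — rather than any genuine mathematical difficulty.
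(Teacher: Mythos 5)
Your proposal is correct and matches the paper's intent exactly: the corollary is stated there as following immediately from Propositions~\ref{prop:xyz} and \ref{prop:ell}, via precisely the trivial decomposition $\w(P_\de)=\big(\w(P_\de)\setminus\ell\big)\cup\big(\w(P_\de)\cap\ell\big)$ that you use. Nothing further is needed.
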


Thus, the simple realization $\w(P_\de)$ of the M\"obius strip is the union of a vertical segment with the graph of a rational function on a polynomially defined subset of $\R^2$. 
One should probably emphasize here that this conclusion holds for any, however large $\de>0$. 
To state the following corollary, let us consider the infinite-width M\"obius strip and its $\w$-image: 
\begin{equation*}
	P_\infty:=\bigcup_{\de>0}P_\de=[0,2\pi)\times\R\quad\text{and}\quad 
	\w(P_\infty)=\bigcup_{\de>0}\w(P_\de).
\end{equation*}
For each $(x,y)\in\R^2$, consider also the corresponding vertical ``cross-section'' of the ``entire'' simple M\"obius strip and its ``cardinality'':   
\begin{equation}\label{eq:Z,n}
	Z_\w(x,y):=\{z\in\R\colon(x,y,z)\in\w(P_\infty)\} \quad\text{and}\quad 
	n_\w(x,y):=\card Z_\w(x,y), 
\end{equation}
where $\card S$ equals the cardinality of a set $S$ if $S$ is finite and equals $\infty$ otherwise. 
Then Corollary~\ref{cor:w(P)} immediately yields 

\begin{corollary}\label{cor:n,Z}
$Z_\w(-1,0)=[-1,1]$ and $n_\w(x,y)=1$ for all $(x,y)\in\R^2\setminus\{(-1,0)\}$. 
\end{corollary}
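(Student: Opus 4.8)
The plan is to obtain Corollary~\ref{cor:n,Z} as a direct bookkeeping consequence of Corollary~\ref{cor:w(P)}, taking the limit $\de\to\infty$. First I would recall that, by definition, $\w(P_\infty)=\bigcup_{\de>0}\w(P_\de)$, so for each fixed $(x,y)$ one has $Z_\w(x,y)=\bigcup_{\de>0}\{z\colon(x,y,z)\in\w(P_\de)\}$. Applying Corollary~\ref{cor:w(P)}, $\w(P_\de)=G(f)\cup J_\de$ with $J_\de=\{(-1,0,z)\colon|z|\le\si_\de\}$, I would split into the two obvious cases.

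For $(x,y)\ne(-1,0)$: the vertical segment $J_\de$ contributes nothing, since every point of $J_\de$ has first two coordinates $(-1,0)$. Hence $(x,y,z)\in\w(P_\de)$ forces $(x,y,z)\in G(f)$, i.e. $(x,y)\in S_\de$ and $z=f(x,y)$. Since $\bigcup_{\de>0}S_\de=\R^2\setminus\{(-1,0)\}$ (noted just after Theorem~\ref{th:xyz}), for large enough $\de$ we have $(x,y)\in S_\de$, and then the only admissible $z$ is the single value $f(x,y)$; for smaller $\de$ there is no admissible $z$. Therefore $Z_\w(x,y)=\{f(x,y)\}$ is a singleton, so $n_\w(x,y)=1$.

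For $(x,y)=(-1,0)$: the point $(-1,0)$ is excluded from the domain $S_\de$ of $f$, so $G(f)$ contributes no point with these first two coordinates; the only contribution is from $J_\de$, giving $\{z\colon(-1,0,z)\in\w(P_\de)\}=[-\si_\de,\si_\de]$. As $\de\to\infty$ we have $b_\de=\tfrac{\de\wedge\sqrt2}2\to\tfrac{\sqrt2}2$ and hence $\si_\de=2b_\de\sqrt{1-b_\de^2}\to 2\cdot\tfrac1{\sqrt2}\cdot\tfrac1{\sqrt2}=1$, with $\si_\de=1$ already for all $\de\ge\sqrt2$ (as remarked after Proposition~\ref{prop:ell}). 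Thus $Z_\w(-1,0)=\bigcup_{\de>0}[-\si_\de,\si_\de]=[-1,1]$, which is exactly the asserted value.

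I do not anticipate a genuine obstacle here: the statement is essentially a restatement of Corollary~\ref{cor:w(P)} after passing to the union over $\de$, and the only things to check are the two elementary case distinctions above and the monotone limit $\si_\de\uparrow1$. The one point worth stating carefully is that the graph $G(f)$ is literally a graph of a function on $S_\de$, so it meets each vertical line over $S_\de$ in exactly one point — this is what makes $n_\w(x,y)=1$ rather than merely $n_\w(x,y)<\infty$ off the line $\ell$.
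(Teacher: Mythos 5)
Your proposal is correct and matches the paper's (implicit) argument: the paper presents Corollary~\ref{cor:n,Z} as an immediate consequence of Corollary~\ref{cor:w(P)}, and your write-up simply spells out the union-over-$\de$ bookkeeping, the fact that $\bigcup_{\de>0}S_\de=\R^2\setminus\{(-1,0)\}$, and that $\si_\de=1$ for $\de\ge\sqrt2$. Nothing further is needed.
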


\begin{center}
	***
\end{center}

Whereas the common M\"obius strip realization $\vv(P_\de)$ looks similar to $\w(P_\de)$ for small enough half-width $\de$, the situation changes dramatically when $\de$ is large enough. 
%
It is known and straightforward to check that for each real $\de>0$ the common M\"obius strip realization $\vv(P_\de)$ is a part of the cubic surface given by the ``implicit'' equation 
\begin{equation*}
	-y + x^2 y + y^3 - 2 
	x z - 2 x^2 z - 2 y^2 z + y z^2 = 0; 
\end{equation*}
see e.g.\ \cite{wolfram_moebius}. Since the left-hand side of this equation is a quadratic polynomial in $z$, one may expect that, in contrast with the simple realization $\w(P_\de)$ of the M\"obius strip, the common realization $\vv(P_\de)$ is ``mainly'' the union of the graphs of two algebraic (but not rational) functions. 
Proposition~\ref{prop:n,Z_v} below confirms and details such an expectation. 
 
Define $Z_\vv(x,y)$ and $n_\vv(x,y)$ the same way $Z_\w(x,y)$ and $n_\w(x,y)$ were defined in \eqref{eq:Z,n}, but using $\vv$ instead of $\w$. 
We have the following counterpart of Corollary~\ref{cor:n,Z}: 
 
\begin{proposition}\label{prop:n,Z_v}
For any $(x,y)\in\R^2$, 
\begin{equation}\label{eq:n=}
	n_\vv(x,y)=
	\left\{
	\begin{aligned}
	2&\text{\quad if\quad}x\ne-1\ \&\ y\ne0, \\ 
	1&\text{\quad if\quad}x=-1\ \&\ y\ne0, \\ 
	1&\text{\quad if\quad}x\notin\{-1,0\}\ \&\ y=0, \\ 	
	\infty&\text{\quad if\quad}x\in\{-1,0\}\ \&\ y=0.  	
	\end{aligned}
	\right.
\end{equation}
Moreover, $Z_\vv(-1,0)=Z_\vv(0,0)=\R$. 
\end{proposition}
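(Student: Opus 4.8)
The plan is to analyze the parametric equations \eqref{eq:old} for $\vv(t,r)$ directly, fixing $(x,y)\in\R^2$ and counting the solutions $(t,r)\in P_\infty=[0,2\pi)\times\R$ to $\vv(t,r)=(x,y,z)$ for some $z$, then reading off $z$. First I would write out the system:
\begin{align*}
x&=(1+r\cos\tfrac t2)\cos t,\\
y&=(1+r\cos\tfrac t2)\sin t,\\
z&=r\sin\tfrac t2.
\end{align*}
The key observation is that the first two equations involve only the single ``radial'' factor $\rho:=1+r\cos\tfrac t2$, so $(x,y)=\rho(\cos t,\sin t)$. Thus, away from the origin, $t$ is determined up to the two-valued choice of polar angle of $(x,y)$ versus its antipode (corresponding to $\rho$ of either sign), and once $t$ is fixed, $\cos\tfrac t2$ and $\sin\tfrac t2$ are fixed (up to a sign governed by the half-angle, which must be tracked carefully since $t\in[0,2\pi)$ forces $\tfrac t2\in[0,\pi)$, hence $\sin\tfrac t2\ge0$ and $\cos\tfrac t2$ of definite sign on each half). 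From $\rho=1+r\cos\tfrac t2$ one then solves for $r$ uniquely \emph{provided} $\cos\tfrac t2\ne0$, i.e.\ $t\ne\pi$; and then $z=r\sin\tfrac t2$ is determined. This gives the generic count of two.

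The bulk of the work is the careful case analysis for the exceptional loci. I would organize it as follows. Case $y\ne0$, $x\ne-1$: the polar angle of $(x,y)$ and that of $(-x,-y)$ are two distinct values in $[0,2\pi)$, neither equal to $\pi$; each yields a unique $r$ and hence a unique $z$, and I must check the two resulting $z$-values are distinct (they are, because the sign of $\sin\tfrac t2$ differs between the two angles when exactly one lies in $(0,\pi)$ and the other in $(\pi,2\pi)$; handle the subcase where both angles lie in the same half separately — this happens when $y=0$). Case $x=-1$, $y\ne0$: here one of the two candidate angles is $\pi$, where $\cos\tfrac\pi2=0$ forces $\rho=1$, but we need $\rho=\pm\sqrt{1+y^2}\ne1$, a contradiction; so only the other angle survives, giving $n_\vv=1$. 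Case $y=0$, $x\notin\{-1,0\}$: $(x,0)$ and $(-x,0)$ have polar angles $0$ and $\pi$; angle $\pi$ is again excluded as before, so only $t=0$ (if $x>0$) or $t=\pi$ wait — I must be careful: if $x<0$ the point itself has angle $\pi$, excluded, and the antipode has angle $0$; either way exactly one admissible $t$, and at that $t$ we have $\sin\tfrac t2=0$ (when $t=0$), giving $z=0$ uniquely, so $n_\vv=1$. Case $y=0$, $x\in\{-1,0\}$: for $(x,y)=(0,0)$ we need $\rho=0$, i.e.\ $1+r\cos\tfrac t2=0$, which for every $t$ with $\cos\tfrac t2\ne0$ has a solution $r=-1/\cos\tfrac t2$, and then $z=-\tan\tfrac t2$ ranges over all of $\R$ as $t$ ranges over $(0,\pi)\cup(\pi,2\pi)$; hence $Z_\vv(0,0)=\R$ and $n_\vv=\infty$. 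For $(x,y)=(-1,0)$: the point has angle $\pi$ (excluded) and its antipode $(1,0)$ has angle $0$, giving $t=0$, $z=0$ — but that accounts for only one point; the resolution is that $(-1,0,z)$ can also be reached via the limit/other branch: actually one should instead note $(-1,0)=\rho(\cos t,\sin t)$ with $\rho=-1$ forces $t=0$ giving $(1,0)$, contradiction, or $\rho=1$, $t=\pi$, consistent, and then $r\cos\tfrac\pi2=0$ is automatic for all $r$, while $z=r\sin\tfrac\pi2=r$ ranges over all of $\R$; hence $Z_\vv(-1,0)=\R$, $n_\vv=\infty$. Assembling these cases gives \eqref{eq:n=} and the two ``$\R$'' equalities.

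The main obstacle I anticipate is bookkeeping the half-angle signs consistently with the restriction $t\in[0,2\pi)$ — in particular verifying that in the generic case the two admissible values of $t$ really do produce two \emph{distinct} $z$-values (rather than accidentally coinciding), and correctly pinpointing exactly when an admissible angle equals $0$ or $\pi$ so that the degenerate branches ($\cos\tfrac t2=0$, or $\sin\tfrac t2=0$) are triggered. A clean way to avoid error is to substitute $u:=\cos t$, $v:=\sin t$ and note $r\cos\tfrac t2=\pm r\sqrt{\tfrac{1+u}2}$, $r\sin\tfrac t2=r\sqrt{\tfrac{1-u}2}$, reducing everything to sign choices; alternatively one can simply quote that $\vv(P_\de)$ lies on the stated cubic surface $-y+x^2y+y^3-2xz-2x^2z-2y^2z+yz^2=0$, solve this quadratic in $z$ (its discriminant and the vanishing of its leading coefficient $y$ giving precisely the four-way split), and then check surjectivity of each branch onto the relevant region — but the discriminant analysis is itself somewhat delicate, so I would keep the direct parametric argument as the primary route and use the cubic only as a consistency check.
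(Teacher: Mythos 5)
Your overall strategy (solve $(x,y)=\rho(\cos t,\sin t)$ with $\rho=1+r\cos\tfrac t2$, get at most two candidate angles, solve for $r$, read off $z$) is the same as the paper's, but the two central cases are handled incorrectly, and this is a genuine gap. First, in the case $x=-1$, $y\ne0$ you claim that ``one of the two candidate angles is $\pi$'' and is therefore excluded, leaving a single preimage. That is false: for $(x,y)=(-1,1)$, say, the two candidate angles are $\tfrac{3\pi}4$ and $\tfrac{7\pi}4$, neither equal to $\pi$, and both yield admissible pairs $(t,r)$; one checks that $\vv\big(\tfrac{3\pi}4,(\sqrt2-1)/\cos\tfrac{3\pi}8\big)=\vv\big(\tfrac{7\pi}4,(\sqrt2+1)/\sin\tfrac{3\pi}8\big)=(-1,1,1)$. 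So on the line $x=-1$ there are still \emph{two} parameter preimages; $n_\vv=1$ there only because their $z$-coordinates coincide, which your argument never detects. Second, and correlatively, your justification of distinctness of the two $z$-values in the generic case $x\ne-1$, $y\ne0$ --- ``the sign of $\sin\tfrac t2$ differs between the two angles'' --- is vacuous, since $t\in[0,2\pi)$ forces $\sin\tfrac t2\ge0$ for \emph{both} angles (it is $\cos\tfrac t2$ whose sign flips, and that alone decides nothing because $r$ changes too). As the example above shows, the two $z$-values genuinely can coincide, so distinctness cannot follow from any sign bookkeeping alone.

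What is missing is precisely the computation the paper performs in its Lemma~\ref{lem:y ne0}: writing the two preimages as $\big(\th,\tfrac{\rho-1}{\cos(\th/2)}\big)$ and $\big(\th+\pi,\tfrac{\rho+1}{\sin(\th/2)}\big)$ with $\th\in[0,\pi)$ and signed $\rho$, the corresponding heights are $z_1=(\rho-1)\tan\tfrac\th2$ and $z_2=(\rho+1)\cot\tfrac\th2$, and $z_1=z_2$ is equivalent to $\rho\cos\th=-1$, i.e.\ to $x=-1$. This single identity is what produces the dichotomy between the first two lines of \eqref{eq:n=}; without it neither line is proved. Your remaining cases ($y=0$ with $x\notin\{-1,0\}$, and the points $(0,0)$ and $(-1,0)$, where $z=-\tan\tfrac t2$ resp.\ $z=r$ sweep out all of $\R$) are fine and match the paper's Lemmas~\ref{lem:y=0}, \ref{lem:0,0}, and \ref{lem:-1,0}; the alternative route through the cubic surface that you mention is only sketched and would need the full discriminant analysis to count as a proof.
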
 

So, the ``typical'' vertical ``cross-section'' of the ``entire'' common M\"obius strip consists of precisely two points. 
Moreover, going over the lines of the proof of Proposition~\ref{prop:n,Z_v}, one can see that the set of all points $(x,y)\in\R^2$ for which the vertical ``cross-section'' of the common M\"obius strip $\vv(P_\de)$ contains exactly two distinct points is of positive Lebesgue measure if $\de>\sqrt2$ (but not if $\de\le\sqrt2$); in particular, look at \eqref{eq:r1,r2,rho} and note that 
\begin{equation*}
	\inf_{\th\in(0,\pi)}
	\Big[\Big(\dfrac{\rho-1}{\cos\tfrac\th2}\Big)^2\,\vee\Big(\dfrac{\rho+1}{\sin\tfrac\th2}\Big)^2\Big]
	=
	(\rho-1)^2+(\rho+1)^2
	=2+2\rho^2, 
\end{equation*}
which is greater than $2$ for any $\rho>0$, but is however close to $2$ if $\rho$ is small enough.

For the common M\"obius strip realization, we have the following counterpart of Theorem~\ref{th:new}: 

\begin{theorem}\label{th:old}
The map $P_\de\ni(t,r)\mapsto\vv(t,r)\in\vv(P_\de)$ is a homeomorphism 
iff $\de<2$. 
\end{theorem}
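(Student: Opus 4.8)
The plan is to reduce Theorem~\ref{th:old} to the injectivity of $\vv$ on $P_\de$, and then to describe the fibers of $\vv$ explicitly, much as Proposition~\ref{prop:new} does for $\w$.

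First I would record the standard topological reduction. Equipped with the M\"obius topology, $P_\de$ is a compact Hausdorff space (it is homeomorphic to the usual compact M\"obius band obtained from $\overline P_\de$ by identifying $(2\pi,r)$ with $(0,-r)$), the identity $\vv(t+2\pi,-r)=\vv(t,r)$ shows that $\vv$ is a well-defined continuous map on $P_\de$, and $\vv(P_\de)\subseteq\R^3$ is Hausdorff. Since a continuous bijection from a compact space onto a Hausdorff space is a homeomorphism, the map $P_\de\ni(t,r)\mapsto\vv(t,r)\in\vv(P_\de)$ is a homeomorphism if and only if $\vv$ is injective on $P_\de$, i.e.\ if and only if $\vv(p_1)\ne\vv(p_2)$ whenever $p_1=(t_1,r_1)$ and $p_2=(t_2,r_2)$ are distinct points of $[0,2\pi)\times[-\de,\de]$.

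Next I would analyze the equation $\vv(p_1)=\vv(p_2)$. Writing $\vv(t,r)=\bigl((1+r\cos\tfrac t2)\cos t,\,(1+r\cos\tfrac t2)\sin t,\,r\sin\tfrac t2\bigr)$, one sees that for each fixed $t$ the line $\{\vv(t,r)\colon r\in\R\}$ lies in the vertical plane $\Pi_t$ spanned by the $z$-axis and the direction $(\cos t,\sin t,0)$ (for $t=\pi$ this is the $xz$-plane and the segment is $\{(-1,0,r)\}$). Hence, if $t_1\not\equiv t_2\pmod\pi$, then $\Pi_{t_1}\cap\Pi_{t_2}$ is the $z$-axis, so a common value $\vv(p_1)=\vv(p_2)$ would have to lie on the $z$-axis; but $\vv(t,r)$ has $x=y=0$ only when $1+r\cos\tfrac t2=0$ (in particular $\cos\tfrac t2\ne0$), in which case $\vv(t,r)=(0,0,-\tan\tfrac t2)$, and $\tan\tfrac{t_1}2=\tan\tfrac{t_2}2$ with $t_1,t_2\in[0,2\pi)$ forces $t_1=t_2$, a contradiction. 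The case $t_1=t_2$ is also impossible: the $z$-coordinate gives $(r_1-r_2)\sin\tfrac{t_1}2=0$, so either $r_1=r_2$ or $t_1=0$, the latter making $\vv(0,\cdot)=(1+\cdot\,,0,0)$ injective. Thus only the case $t_1\ne t_2$, $t_1\equiv t_2\pmod\pi$ remains; after relabeling, $t_1=t\in[0,\pi)$ and $t_2=t+\pi$. Here I would substitute $\cos t_2=-\cos t$, $\sin t_2=-\sin t$, $\cos\tfrac{t_2}2=-\sin\tfrac t2$, $\sin\tfrac{t_2}2=\cos\tfrac t2$ and equate coordinates. The value $t=\tfrac\pi2$ is excluded at once (the $z$-equation forces $r_1=r_2$, then the $y$-equation gives $1=-1$); for $t\in[0,\pi)\setminus\{\tfrac\pi2\}$ the $z$-equation reads $r_1\sin\tfrac t2=r_2\cos\tfrac t2$, while the $x$- and $y$-equations both reduce to $1+r_1\cos\tfrac t2=-1+r_2\sin\tfrac t2$, and solving this linear system (recall $\cos\tfrac t2>0$ as $t<\pi$) gives
\[
	r_1=-\frac{2\cos\tfrac t2}{\cos t},\qquad r_2=-\frac{2\sin\tfrac t2}{\cos t}.
\]
So $\vv$ has a self-intersection inside $P_\de$ if and only if $\dfrac{2\max(\cos\tfrac t2,\sin\tfrac t2)}{|\cos t|}\le\de$ for some $t\in[0,\pi)\setminus\{\tfrac\pi2\}$. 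Setting $u=\tfrac t2$, this left-hand side equals $\dfrac{2\cos u}{\cos 2u}$ on $[0,\tfrac\pi4)$ and $\dfrac{2\sin u}{-\cos 2u}$ on $(\tfrac\pi4,\tfrac\pi2)$; differentiating shows the first expression is strictly increasing from $2$ (at $u=0$) to $\infty$ and the second is strictly decreasing from $\infty$ to $2$ (the value $2$ not attained). Hence $\inf_t\max(|r_1|,|r_2|)=2$, attained only at $t=0$, where $r_1=-2$, $r_2=0$, and $\vv(0,-2)=\vv(\pi,0)=(-1,0,0)$. Therefore: if $\de<2$ there is no self-intersection, so $\vv$ is injective and, by the reduction above, a homeomorphism; if $\de\ge2$ then $(0,-2)$ and $(\pi,0)$ are distinct points of $P_\de$ with the same $\vv$-image, so $\vv$ is not injective, hence not a homeomorphism.

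The main obstacle is the last case, $t_1\equiv t_2\pmod\pi$: one must solve the coordinate equations cleanly and then run the one-variable optimization showing that the critical half-width is exactly $2$ and is reached only at the seam value $t=0$, so that $\de=2$ is precisely where the surface begins to self-intersect, at the single point $(-1,0,0)$. The reductions of the other two cases are routine once one has the observation that every generating segment lies in a vertical plane through the $z$-axis; this observation, together with much of the coordinate bookkeeping, overlaps with the proof of Proposition~\ref{prop:n,Z_v}, and I would quote from there rather than repeat it.
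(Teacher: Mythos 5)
Your proposal is correct, and its skeleton matches the paper's: the same reduction (compact source, Hausdorff target, so homeomorphism $\iff$ injectivity), the same witness $\vv(0,-2)=\vv(\pi,0)=(-1,0,0)$ for $\de\ge2$, and the same linear system in the antipodal case $t_2=t_1+\pi$ with the same solution $r_1=-2\cos\tfrac t2/\cos t$, $r_2=-2\sin\tfrac t2/\cos t$. Where you diverge is in two places. First, you organize the preliminary cases geometrically: each generating segment lies in the vertical plane $\Pi_t$ through the $z$-axis, so $t_1\not\equiv t_2\pmod\pi$ forces the common point onto the $z$-axis, where the $z$-coordinate $-\tan\tfrac t2$ is injective in $t\in[0,2\pi)$; the paper instead splits algebraically on whether $\cos t_1$, $\cos t_2$ vanish and whether $x=0$, reaching the same conclusion. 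Your plane argument is arguably more transparent and disposes of several of the paper's subcases at once. Second, and more substantively, your finish for $\de<2$ is a one-variable optimization: you show $2\cos u/\cos 2u$ increases from $2$ on $[0,\tfrac\pi4)$ and $2\sin u/(-\cos 2u)$ decreases to $2$ on $(\tfrac\pi4,\tfrac\pi2)$, so $\inf_t\,(|r_1|\vee|r_2|)=2$, attained only at $t=0$. The paper avoids calculus entirely with the inequality $\de^2\ge r_1^2\vee r_2^2\ge|r_1^2-r_2^2|=4/|\cos t|\ge4$, which is shorter; your route costs two derivative computations but buys more information, namely that the critical half-width is exactly $2$ and that at $\de=2$ the self-intersection first appears at the single point $(-1,0,0)$, which also makes the $\de\ge2$ direction fall out of the same computation. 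Both arguments are complete and valid.
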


In \cite{schwarz90,wiki_moebius}, the common M\"obius strip realization is given with $\de\le1$, which may be compared with Theorem~\ref{th:old}. 

Theorem~\ref{th:old} and Proposition~\ref{prop:n,Z_v} are illustrated in Figure~6. 

\begin{figure}[h]
	\centering
		\includegraphics[width=.80\textwidth]
		{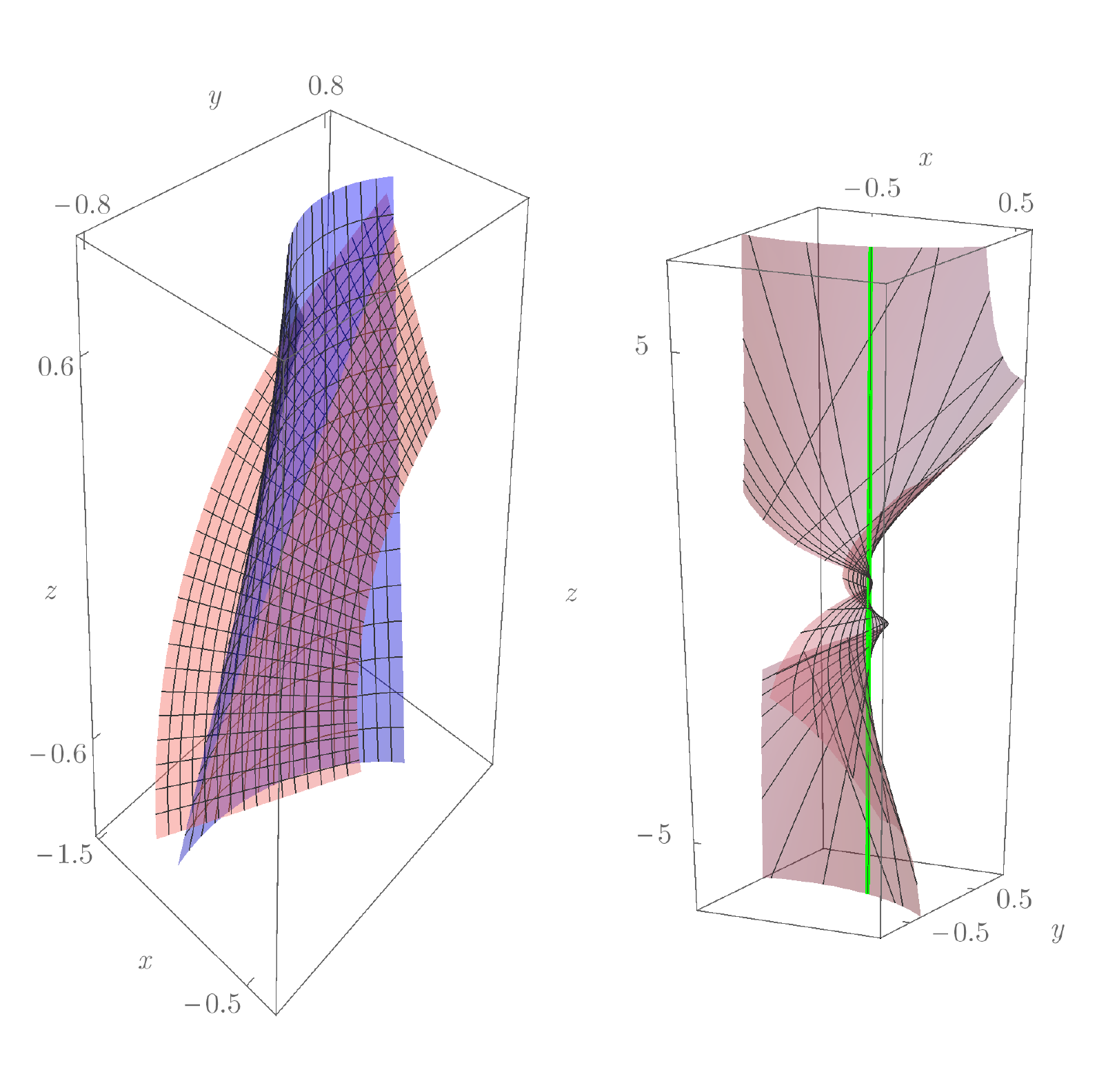}
	\label{fig:old-self}
	\caption{Parts of the image $\vv(P_\infty)$ of the M\"obius strip $P_\infty$ under map $\vv$. 
	Left panel: Surfaces $\vv\big([0,\frac\pi5]\times[-2.5,-1.5]\big)$ (pink), \break  $\vv\big([2\pi-\frac\pi5,2\pi]\times[1.5,2.5]\big)$ (pink), and $\vv\big([\pi-\frac\pi5,\pi+\frac\pi5]\times[-0.9,0.9]\big)$ (blue) --  in a neighborhood of point $(-1,0,0)$. Right panel: Surface $\big\{\vv\big(t,-\tfrac1{\cos(\tfrac t2+h)}\big)\colon t\in[0,2\pi],\,|h|\le0.5\big\}$ (pink) and a segment (green) of the vertical line $\ell$ through point $(0,0,0)$ --  in a neighborhood of point $(0,0,0)$. 	
	}  
\end{figure}

Significant efforts have been directed (see e.g.\ \cite{halp-weaver,schwarz90,starostin-heijden}) towards obtaining physically realizable -- that is, requiring the least possible bending energy and no stretching/shrinking -- models of the M\"obius strip to be made from a paper band, in a sense culminating in \cite{starostin-heijden}. However, the corresponding equations in those papers are much more complicated than \eqref{eq:new} and even \eqref{eq:old}; only numerical solutions were given in \cite{starostin-heijden} for the physically realizable model of a paper M\"obius strip presented there.  

\section{Proofs}\label{proofs} 
Modulo Proposition~\ref{prop:new}, the following is easy: 
\begin{proof}[Proof of Theorem~\ref{th:new}]
It is not hard to see that the M\"obius strip $P_\de$ is compact in its topology. Also, since $\w(2\pi-,r)=\w(0,-r)$ for $r\in[-\de,\de]$, it is easy to check that the map $\w$ is continuous. It follows that $\w$ is homeomorphic iff it is one-to-one. It remains to refer to \eqref{eq:I=empty}. 
\end{proof}

We still have to provide 

\begin{proof}[Proof of Proposition~\ref{prop:new}]
Suppose that $(p_1,p_2)\in Q_\de$, so that 
$(t_1,r_1)$ and $(t_2,r_2)$ are distinct points in $P_\de$ such that $\w(t_1,r_1)=\w(t_2,r_2)$. 
Then, by \eqref{eq:new}, $\sin t_1+r_1\sin\tfrac{t_1}2=\sin t_2+r_2\sin\tfrac{t_2}2$ and 
$r_1\sin\tfrac{t_1}2=r_2\sin\tfrac{t_2}2$, whence  $\sin t_1=\sin t_2$. Because the points $(t_1,r_1)$ and $(t_2,r_2)$ are in $P_\de$ and hence $t_1$ and $t_2$ are in $[0,2\pi)$, it follows that either $t_1=t_2$ or 
$t_1+t_2=(2k+1)\pi$ for some $k\in\{0,1\}$. 

If $t_1=t_2=:t$, then \eqref{eq:new} implies $\cos t+r_1\cos\tfrac t2=\cos t+r_2\cos\tfrac t2$ and $r_1\sin\tfrac t2=r_2\sin\tfrac t2$. Since $\cos^2\tfrac t2+\sin^2\tfrac t2=1\ne0$, we have $(t_1,r_1)=(t_2,r_2)$, which is a contradiction. 

So, $t_1\ne t_2$ and $t_1+t_2=(2k+1)\pi$ for some $k\in\{0,1\}$, whence $t_1+t_2\in\{\pi,3\pi\}$.  
In particular, this verifies the first condition in \eqref{eq:conds}. 
It also follows that  
\begin{equation}\label{eq:t2 to t1}
	\cos\tfrac{t_2}2=(-1)^k\sin\tfrac{t_1}2\quad\text{and}\quad \sin\tfrac{t_2}2=(-1)^k\cos\tfrac{t_1}2. 
\end{equation} 
Therefore, the assumption $\w(t_1,r_1)=\w(t_2,r_2)$ implies the following system of linear equations:  
\begin{equation}\label{eq:syst}
	r_1\cos \tfrac{t_1}2+2\cos^2\tfrac{t_1}2=(-1)^k r_2\sin \tfrac{t_1}2+2\sin^2\tfrac{t_1}2\quad\text{and}\quad 
	r_1\sin \tfrac{t_1}2=(-1)^k r_2\cos \tfrac{t_1}2  
\end{equation}
for $r_1$ and $r_2$.  
The determinant of this system is $(-1)^k (\sin^2\tfrac{t_1}2-\cos^2\tfrac{t_1}2)=\break 
(-1)^{k+1}\cos t_1$. If it is zero, then $t_1\in\{\pi/2,3\pi/2\}$; hence, in view of the conditions $t_1+t_2\in\{\pi,3\pi\}$ and $t_1\in[0,2\pi)$ and $t_2\in[0,2\pi)$, we have either $t_1=\pi/2=t_2$ or $t_1=3\pi/2=t_2$, which contradicts the conclusion that $t_1\ne t_2$.  
So, 
\begin{equation}\label{eq:t1 notin}
	t_1\notin\{\pi/2,3\pi/2\},
\end{equation}
the determinant is nonzero, and hence the system \eqref{eq:syst} has a unique solution -- which is given by the formula 
\begin{equation}\label{eq:r1,r2=}
	r_1=
	-2\cos \tfrac{t_1}2\quad\text{and}\quad r_2=
	(-1)^{k+1}2\sin \tfrac{t_1}2, 
\end{equation}
for each $k\in\{0,1\}$. 
This verifies the second and third conditions in \eqref{eq:conds}. 

Moreover, because the points $(t_1,r_1)$ and $(t_2,r_2)$ are in $P_\de$, we have $|r_1|\vee|r_2|\le\de$. That is, in view of \eqref{eq:r1,r2=}, \eqref{eq:t1 notin}, and \eqref{eq:c,s},  
\begin{equation}\label{eq:T}
\begin{aligned}
	t_1\in T_\de:=&\{t\in[0,2\pi)\colon\sqrt2<|2\cos\tfrac t2|\vee|2\sin\tfrac t2|\le\de\} \\ 
	=&\{t\in[0,2\pi)\colon s_\de\le|\sin t|<1\},  
\end{aligned}	
\end{equation}
which completes the verification of all the four conditions in \eqref{eq:conds} -- under the assumption that $(p_1,p_2)\in Q_\de$. 

Vice versa, assume now that the conditions in \eqref{eq:conds} hold for some points $p_1=(t_1,r_1)$ and $p_2=(t_2,r_2)$ in 
$[0,2\pi)\times\R$ and some $k\in\{0,1\}$, and let us then show that $(p_1,p_2)\in Q_\de$. This is easy to do, as the above reasoning in this proof is essentially reversible. 
Anyway, recall here that the first condition in \eqref{eq:conds} implies \eqref{eq:t2 to t1}. So, it follows from \eqref{eq:new} 
and the first three conditions in \eqref{eq:conds} that 
\begin{equation}\label{eq:same}
	\w(t_1,r_1)=\w(t_2,r_2)=(-1,0,-\sin t_1),     
\end{equation}
which verifies the last condition in the definition \eqref{eq:Q} of $Q_\de$. 

The last condition in \eqref{eq:conds} implies that $t_1\notin\{\pi/2,3\pi/2\}$. Hence, by the first condition in \eqref{eq:conds}, $t_1\ne t_2$, which verifies the condition $p_1\ne p_2$ in the definition \eqref{eq:Q} of $Q_\de$. 

The last three conditions in \eqref{eq:conds} together with 
the second line in \eqref{eq:T} imply that $|r_1|\vee|r_2|\le\de$, which verifies the condition $(p_1,p_2)\in P_\de^2$ in the definition \eqref{eq:Q} of $Q_\de$. 
Thus, the first part of Proposition~\ref{prop:new} is proved. 

It remains to prove \eqref{eq:I=}. Take any $(x,y,z)\in I_\de$. Then, by \eqref{eq:I}, there exists some $(p_1,p_2)\in Q_\de$ such that $\w(p_1)=(x,y,z)$. So, by the just proved first part of Proposition~\ref{prop:new}, for some points $p_1=(t_1,r_1)$ and $p_2=(t_2,r_2)$ in 
$[0,2\pi)\times\R$ the conditions in \eqref{eq:conds} hold. So, in view of \eqref{eq:same}, 
$(x,y,z)=(-1,0,s)$ for $s:=-\sin t_1$, and then also $s_\de\le|s|<1$, so that $(x,y,z)$ belongs to the set on the right-hand side of equality \eqref{eq:I=}. 

Vice versa, take any $s$ such that $s_\de\le|s|<1$. To complete the proof of Proposition~\ref{prop:new}, it now remains to show that $(-1,0,s)\in I_\de$. Clearly, the condition $s_\de\le|s|<1$ implies that there is some $t\in(0,2\pi)$ such that $s=-\sin t_1$, so that the last condition in \eqref{eq:conds} is satisfied. 
Next, there is some $k\in\{0,1\}$ such that $k\pi<t_1\le(k+1)\pi$. 
Further, take $t_2$, $r_1$, and $r_2$ according to the first three conditions in \eqref{eq:conds}. Then $0\le t_2<2\pi$ and, by the first part of Proposition~\ref{prop:new}, for the resulting points $p_1=(t_1,r_1)$ and $p_2=(t_2,r_2)$ one has $(p_1,p_2)\in Q_\de$. Moreover, again by \eqref{eq:same},  
indeed $
(-1,0,s)=(-1,0,-\sin t_1)\in I_\de$.  

Proposition~\ref{prop:new} is now completely proved. 
\end{proof}

Assuming that Proposition~\ref{prop:xyz} holds, it is easy to complete 

\begin{proof}[Proof of Theorem~\ref{th:xyz}]
The image $\w(P_\de)$ of the compact set $P_\de$ under the continuous map $\w$ is compact and hence closed. So, in view of Proposition~\ref{prop:xyz}, it suffices to show that the set $	\w(P_\de)\setminus\ell$ is dense in $\w(P_\de)$. 

To do this, take any $(t,r)\in P_\de$ such that $\w(t,r)\in\ell$, so that $\w(t,r)=(-1,0,z)$ for some $z\in\R$. We only need to show that this point, $\w(t,r)=(-1,0,z)$, can be approximated however closely by points in $\w(P_\de)\setminus\ell$. 

Consider first the case $r=0$. Then, by \eqref{eq:new}, 
the condition $\w(t,r)=(-1,0,z)$ implies that $\cos t=-1$ and hence $t=\pi$ and $\w(t,r)=(-1,0,0)$. Take now any $\vp\in(0,\pi)$. Then $(\pi+\vp,0)\in P_\de$ and $\w(\pi+\vp,0)=(-\cos\vp,-\sin\vp,0)\in\w(P_\de)\setminus\ell$. Moreover, letting $\vp\downarrow0$, we have $\w(\pi+\vp,0)\to(-1,0,0)=\w(t,r)$, thus obtaining the desired approximation. 

Assume now that $r\ne0$. Then for any $\vp\in(0,1)$ one has $(t,(1-\vp)r)\in P_\de$ and $\w(t,(1-\vp)r)=(-1-r\vp\cos\frac t2,-r\vp\sin\frac t2,z-r\vp\sin\frac t2)\in\w(P_\de)\setminus\ell$. 
Moreover, letting $\vp\downarrow0$, we have $\w(t,(1-\vp)r)\to(-1,0,z)=\w(t,r)$, thus obtaining the desired approximation in the latter case as well. 
\end{proof}

We still have to provide 

\begin{proof}[Proof of Proposition~\ref{prop:xyz}]
%
Let us first show that $\w(P_\de)\setminus\ell\subseteq G(f)$. To do that, take any $(x,y,z)\in\w(P_\de)\setminus\ell$, so that $(x,y)\ne(-1,0)$ and \eqref{eq:new} holds 
for some $(t,r)\in P_\de=[0,2\pi)\times[-\de,\de]$. We want then to show that $(x,y)\in S_\de$ and $f(x,y)=z$. 

By \eqref{eq:new}, 
\begin{equation}\label{eq:x...-y...}
	x\sin\tfrac t2-y\cos\tfrac t2=\cos t\sin\tfrac t2-\sin t\cos\tfrac t2=-\sin\tfrac t2. 
\end{equation}
Also, the condition $(t,r)\in P_\de$ implies that $0\le\tfrac t2<\pi$. 
So, if $\sin\tfrac t2=0$, then $t=0$ and hence, by \eqref{eq:new}, \eqref{eq:f}, and \eqref{eq:g}, 
$(x,y,z)=\w(t,r)=(1+r,0,0)$, $f(x,y)=0=z$, and $g(x,y)=r^2\le\de^2$. Thus, $(x,y)\in S_\de$ and $f(x,y)=z$ -- in the case when $\sin\tfrac t2=0$. 

Consider now the case when $\sin\tfrac t2\ne0$. Then \eqref{eq:x...-y...} yields $x+1=y\cot\tfrac t2$. So, $y=0$ would imply $x=-1$, in contradiction with the condition $(x,y)\ne(-1,0)$. 
It follows that $y\ne0$. Since $0\le\tfrac t2<\pi$, we have 
\begin{equation}\label{eq:cot,...}
	\cot\tfrac t2=\tfrac{x+1}y,\quad t=2\arccot\tfrac{x+1}y,\quad \sin t=\frac{2y(x+1)}{(x+1)^2+y^2};  
\end{equation}
here we use the 
convention that the values of the function $\arccot$ are in the interval $(0,\pi)$. 
Now, by \eqref{eq:new} and \eqref{eq:f}, 
\begin{equation}\label{eq:z=f(x,y)}
	z=y-\sin t=y-\frac{2y(x+1)}{(x+1)^2+y^2}=f(x,y). 
\end{equation}
Moreover, the conditions $(t,r)\in P_\de$, \eqref{eq:new}, $\sin\tfrac t2\ne0$, \eqref{eq:cot,...}, \eqref{eq:z=f(x,y)}, \eqref{eq:f}, and \eqref{eq:g} yield
\begin{multline*}
	\de^2\ge r^2=\frac{z^2}{\sin^2\tfrac t2}
	=z^2(1+\cot^2\tfrac t2)=z^2\,\frac{(x+1)^2+y^2}{y^2}
	=f(x,y)^2\,\frac{(x+1)^2+y^2}{y^2} \\ 
	=g(x,y), 
\end{multline*}
so that $(x,y)\in S_\de$. 
Thus, in the case $\sin\tfrac t2\ne0$ as well, we have $(x,y)\in S_\de$ and $f(x,y)=z$. 
This proves that $\w(P_\de)\setminus\ell\subseteq G(f)$. 

It remains to prove that $G(f)\subseteq \w(P_\de)\setminus\ell$. To do this, take any $(x,y,z)\in G(f)$, so that $(x,y)\in S_\de$ and $f(x,y)=z$. In view of \eqref{eq:G} and \eqref{eq:g}, $G(f)\cap\ell=\emptyset$. 
So, it suffices to show that \eqref{eq:new} holds 
for some $(t,r)\in P_\de=[0,2\pi)\times[-\de,\de]$. 

If $y=0$, then $z=f(x,0)=0$, and the condition $(x,y)\in S_\de$ implies $(x-1)^2=g(x,y)\le\de^2$ and 
\eqref{eq:new} obviously holds 
for $(t,r)=(0,x-1)\in P_\de=[0,2\pi)\times[-\de,\de]$. 

Consider now the case $y\ne0$. Let then 
\begin{equation*}
t:=2\arccot\frac{x+1}y
\quad\text{and}\quad r:=\frac{y-\sin t}{\sin\frac t2}
=\frac y{\sin\frac t2}-2\cos\tfrac t2,   	
\end{equation*}
whence 
\begin{gather*}
t\in(0,2\pi),\quad 
\sin\tfrac t2=\frac{|y|}{\sqrt{(x+1)^2+y^2}}>0,\quad 
\cos\tfrac t2=\frac{(x+1)\sign y}{\sqrt{(x+1)^2+y^2}},\\  
\sin t=\frac{2(x+1)y}{(x+1)^2+y^2}, \quad \cos t=\frac{(x+1)^2-y^2}{(x+1)^2+y^2},\\  	
\quad \cos t+r\cos\tfrac t2=x,\quad 
\sin t+r\sin\tfrac t2=y,\quad 
r^2=g(x,y)^2\le\de^2. 
\end{gather*}
So, $(t,r)\in P_\de$. Moreover, it follows from \eqref{eq:new}, the last line of the above display,  the condition $y\ne0$, and the already proved set inclusion $\w(P_\de)\setminus\ell\subseteq G(f)$ that for some $\tilde z\in\R$ one has 
$(x,y,\tilde z)=\w(t,r)\in\w(P_\de)\setminus\ell\subseteq G(f)$. 
But it was assumed that $(x,y,z)\in G(f)$, and $f$ is a function. We conclude that $\tilde z=z$, whence $\w(t,r)=(x,y,z)
$; that is, indeed \eqref{eq:new} holds 
for the point $(t,r)\in P_\de$ constructed above.  
\end{proof}

\begin{proof}[Proof of Proposition~\ref{prop:ell}]
Let us first show that $\w(P_\de)\cap\ell\subseteq J_\de$. Take any $(x,y,z)\in\w(P_\de)\cap\ell$, so that $x=-1$, $y=0$, and 
\eqref{eq:new} holds for some $(t,r)\in P_\de$. Hence, $t\in[0,2\pi)$, $|r|\le\de$, 
$-1=x=\cos t+r\cos\tfrac t2$, $0=y=\sin t+r\sin\tfrac t2$, and $z=r\sin\tfrac t2$. 

If $\sin\tfrac t2=0$, then $z=0$ and hence $(x,y,z)\in J_\de$. 

Suppose now that $\sin\tfrac t2\ne0$. Then $0=y=\sin t+r\sin\tfrac t2$ yields $r=-2\cos\tfrac t2$ and hence $\de\ge|r|=2|\cos\tfrac t2|$ and $z=r\sin\tfrac t2=-\sin t$. On the other hand, 
\begin{equation}\label{eq:iff t}
\big\{-\sin t\colon t\in[0,2\pi),\,2|\cos\tfrac t2|\le\de\big\}=[-\si_\de,\si_\de]. 
\end{equation}
So, $(x,y,z)\in J_\de$ in the case $\sin\tfrac t2\ne0$ as well, which verifies the set inclusion $\w(P_\de)\cap\ell\subseteq J_\de$. 

The above reasoning is essentially reversible. Indeed, take any $(x,y,z)\in J_\de$, so that $x=-1$, $y=0$, and $z\in[-\si_\de,\si_\de]$. By \eqref{eq:iff t}, we can find some $t\in[0,2\pi)$ such that $z=-\sin t$ and $2|\cos\tfrac t2|\le\de$. 
%
Letting now $r:=-2\cos\tfrac t2$, we have $|r|\le\de$, so that $(t,r)\in P_\de$; moreover, then  $(x,y,z)=(-1,0,-\sin t)=\w(t,r)$ by \eqref{eq:new}, so that $(x,y,z)\in w(P_\de)\cap\ell$. 
This verifies the set inclusion $J_\de\subseteq\w(P_\de)\cap\ell$ and thus completes the proof of Proposition~\ref{prop:ell}. 
\end{proof}

\begin{proof}[Proof of Proposition~\ref{prop:n,Z_v}]
Take any $(x,y)\in\R^2$. Let 
\begin{equation*}
\begin{aligned}
	P(x,y)&:=\big\{(t,r)\in P_\infty\colon(1+r\cos\tfrac t2)\cos t=x,\, (1+r\cos\tfrac t2)\sin t=y\big\}, \\  
	N(x,y)&:=\card P(x,y). 
\end{aligned}	
\end{equation*}
In view of \eqref{eq:old}, 
\begin{equation*}
	Z_\vv(x,y)=\big\{r\sin\tfrac t2\colon(t,r)\in P(x,y)\big\},  
\end{equation*}
whence 
\begin{equation}\label{eq:n le N}
	n_\vv(x,y)=\card Z_\vv(x,y)\le\card P(x,y)=N(x,y). 
\end{equation}

Consider also the pair $(\rho,\th)=\big(\rho(x,y),\th(x,y)\big)$ of the ``polar coordinates'' of the point $(x,y)$, which is uniquely determined by the conditions 
\begin{equation}\label{eq:polar}
	(x,y)=\rho(\cos\th,\sin\th)\quad\text{and}\quad (\rho,\th)\in\R\times[0,\pi),
\end{equation}
provided that $(x,y)\ne(0,0)$. The choice $[0,\pi)$ for the range of values of $\th$ will be convenient, because it implies that  
\begin{equation}\label{eq:cos th/2 ne0}
	\cos\tfrac\th 2\ne0. 
\end{equation}
 
Proposition~\ref{prop:n,Z_v} will easily follow from a few lemmas: 

\begin{lemma}\label{lem:0,0}
$Z_\vv(0,0)=\R$. 
\end{lemma}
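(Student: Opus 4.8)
The plan is to show that the fiber $P(0,0)$ is infinite, because then $Z_\vv(0,0)$ will turn out to be all of $\R$ once we produce enough explicit points. Recall the parametrization \eqref{eq:old}: a point $\vv(t,r)=(x,y,z)$ has $x=(1+r\cos\tfrac t2)\cos t$, $y=(1+r\cos\tfrac t2)\sin t$, $z=r\sin\tfrac t2$. So $(x,y)=(0,0)$ forces $1+r\cos\tfrac t2=0$ whenever $\cos t\ne0$ or $\sin t\ne0$ — but since $\cos t$ and $\sin t$ never vanish simultaneously, in fact $(x,y)=(0,0)$ holds \emph{iff} $1+r\cos\tfrac t2=0$, i.e. $r=-1/\cos\tfrac t2$ (which requires $\cos\tfrac t2\ne0$, i.e. $t\ne\pi$).

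First I would fix this: for every $t\in[0,2\pi)\setminus\{\pi\}$ set $r(t):=-1/\cos\tfrac t2$, so that $(t,r(t))\in P_\infty$ (here the ``infinite width'' is essential, which is why the statement is about $\vv(P_\infty)$ rather than $\vv(P_\de)$) and $\vv\big(t,r(t)\big)=(0,0,z(t))$ with
\begin{equation*}
	z(t)=r(t)\sin\tfrac t2=-\tan\tfrac t2.
\end{equation*}
Then I would observe that as $t$ ranges over $(0,2\pi)\setminus\{\pi\}$, the quantity $\tfrac t2$ ranges over $(0,\pi)\setminus\{\tfrac\pi2\}$, and $\tan$ maps $(0,\tfrac\pi2)$ onto $(0,\infty)$ and $(\tfrac\pi2,\pi)$ onto $(-\infty,0)$, while $t=0$ gives $z=0$. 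Hence $\{z(t):t\in[0,2\pi)\setminus\{\pi\}\}=\R$, which gives $Z_\vv(0,0)\supseteq\R$, and the reverse inclusion is trivial. Therefore $Z_\vv(0,0)=\R$.

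The one point that needs a word of care, rather than a genuine obstacle, is the ``iff'' above: one must note that $(\cos t,\sin t)\ne(0,0)$ for all real $t$, so that $(1+r\cos\tfrac t2)\cos t=0=(1+r\cos\tfrac t2)\sin t$ is equivalent to $1+r\cos\tfrac t2=0$; in particular this also forces $\cos\tfrac t2\ne0$, i.e. $t\ne\pi$, so $r=-1/\cos\tfrac t2$ is well defined on the relevant set. Everything else is the elementary surjectivity of $\tan$ on $(0,\tfrac\pi2)$ and on $(\tfrac\pi2,\pi)$ together with the value $z=0$ at $t=0$. No compactness, and no appeal to the cubic surface equation, is needed here; the lemma is purely a computation with the parametrization \eqref{eq:old}.
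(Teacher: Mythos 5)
Your proof is correct and is essentially the paper's argument: both set $r=-1/\cos\tfrac t2$ so that $\vv(t,r)=(0,0,-\tan\tfrac t2)$, the only difference being that the paper exhibits $t=-2\arctan z+2\pi\ii{z>0}$ explicitly for each $z$, while you argue surjectivity of $t\mapsto-\tan\tfrac t2$ on $[0,2\pi)\setminus\{\pi\}$. The extra remark identifying the full fiber over $(0,0)$ is fine but not needed for the lemma.
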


\begin{proof}[Proof of Lemma~\ref{lem:0,0}]
Take any $z\in\R$ and let $t:=-2\arctan z+2\pi\ii{z>0}$, where $\ii{\cdot}$ denotes the indicator function. Then $t\in[0,2\pi)\setminus\{\pi\}$, so that $\cos\tfrac t2\ne0$. Let now $r:=-1/\cos\tfrac t2$. Then $\vv(t,r)=(0,0,z)$. It remains to recall the definition of $Z_\vv$ (cf.\ \eqref{eq:Z,n}). 
\end{proof}

\begin{lemma}\label{lem:-1,0}
$Z_\vv(-1,0)=\R$. 
\end{lemma}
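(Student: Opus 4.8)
The plan is to mimic the proof of Lemma~\ref{lem:0,0}: for each prescribed third coordinate $z\in\R$, exhibit an explicit point $(t,r)\in P_\infty$ with $\vv(t,r)=(-1,0,z)$. The natural guess is to choose $t$ so that $\cos\tfrac t2=0$; this kills the ``radial'' contributions $r\cos\tfrac t2\cos t$ and $r\cos\tfrac t2\sin t$ to the first two coordinates, while making $\sin\tfrac t2=\pm1$ so that the third coordinate becomes $\pm r$, which is then freely adjustable by the choice of $r$.

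Concretely, I would take $t:=\pi$ and $r:=z$. Then $\cos t=-1$, $\sin t=0$, $\cos\tfrac t2=0$, and $\sin\tfrac t2=1$, so by \eqref{eq:old} a direct computation gives $\vv(\pi,z)=(-1,0,z)$. Since $(\pi,z)\in[0,2\pi)\times\R=P_\infty$, the definition of $Z_\vv$ in \eqref{eq:Z,n} yields $z\in Z_\vv(-1,0)$; as $z$ was an arbitrary real number and $Z_\vv(-1,0)\subseteq\R$ trivially, we conclude $Z_\vv(-1,0)=\R$.

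There is essentially no obstacle here; the only things to check are the routine trigonometric evaluation at $t=\pi$ and the fact that $(\pi,z)$ indeed lies in $P_\infty$ for every real $z$ (which holds because $P_\infty$ imposes no constraint on the second coordinate). One may optionally note, as a consistency check with the set $P(x,y)$ used later, that the system $(1+r\cos\tfrac t2)\cos t=-1$, $(1+r\cos\tfrac t2)\sin t=0$ forces $\sin t=0$ (since $1+r\cos\tfrac t2=0$ would make the first coordinate $0\ne-1$), hence $t\in\{0,\pi\}$, with $t=0$ contributing only $z=0$ and $t=\pi$ contributing all of $\R$; but this finer analysis is not needed for the statement, which only requires the inclusion $\R\subseteq Z_\vv(-1,0)$.
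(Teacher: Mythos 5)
Your proposal is correct and coincides with the paper's own argument: the paper likewise just notes that $\vv(\pi,z)=(-1,0,z)$ for all $z\in\R$. The extra remarks about $P_\infty$ and the set $P(x,y)$ are harmless but not needed.
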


\begin{proof}[Proof of Lemma~\ref{lem:-1,0}]
For all $z\in\R$ one has $\vv(\pi,z)=(-1,0,z)$. 
\end{proof}

\begin{lemma}\label{lem:n ge1}
$n_\vv(x,y)\ge1$. 
\end{lemma}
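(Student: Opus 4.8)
The plan is to show that the projection of $\vv(P_\infty)$ onto the $xy$-plane is all of $\R^2$; equivalently, that for every $(x,y)\in\R^2$ the set $P(x,y)$ is nonempty, so that by \eqref{eq:n le N} we get $n_\vv(x,y)\le N(x,y)$ is bounded below by exhibiting a single $z$ with $(x,y,z)\in\vv(P_\infty)$. Recalling from \eqref{eq:old} that $\vv(t,r)=\big((1+r\cos\tfrac t2)\cos t,\,(1+r\cos\tfrac t2)\sin t,\,r\sin\tfrac t2\big)$, the task reduces to hitting a prescribed radius $1+r\cos\tfrac t2$ and angle $t$.

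First I would dispose of the origin: if $(x,y)=(0,0)$, then $Z_\vv(0,0)=\R\ne\emptyset$ by Lemma~\ref{lem:0,0}, so $n_\vv(0,0)\ge1$.

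Now assume $(x,y)\ne(0,0)$ and bring in the polar coordinates $(\rho,\th)=\big(\rho(x,y),\th(x,y)\big)$ from \eqref{eq:polar}, so $(x,y)=\rho(\cos\th,\sin\th)$ with $\th\in[0,\pi)$; the whole point of fixing the range $[0,\pi)$ is that it guarantees $\cos\tfrac\th2\ne0$, as recorded in \eqref{eq:cos th/2 ne0}. I would then set $t:=\th\in[0,2\pi)$ and $r:=\dfrac{\rho-1}{\cos\frac\th2}\in\R$, so that $(t,r)\in P_\infty$ and $1+r\cos\tfrac t2=\rho$. A one-line substitution into \eqref{eq:old} gives $\vv(t,r)=\big(\rho\cos\th,\rho\sin\th,\,r\sin\tfrac\th2\big)=\big(x,y,\,r\sin\tfrac\th2\big)$, whence $r\sin\tfrac\th2\in Z_\vv(x,y)$ and $n_\vv(x,y)\ge1$.

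There is essentially no obstacle here; the only subtlety is purely bookkeeping, namely that $\th$ must be taken in $[0,\pi)$ rather than $[0,2\pi)$, since a half-angle $\tfrac\th2=\tfrac\pi2$ would make the definition of $r$ break down. This is precisely why the range $[0,\pi)$ was adopted in \eqref{eq:polar}, so the argument goes through directly.
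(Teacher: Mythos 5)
Your proposal is correct and is essentially the paper's own argument: fix the signed polar coordinates $(\rho,\th)$ with $\th\in[0,\pi)$ so that $\cos\tfrac\th2\ne0$, set $r=(\rho-1)/\cos\tfrac\th2$, and observe $\vv(\th,r)=(x,y,r\sin\tfrac\th2)$, hence $Z_\vv(x,y)\ne\emptyset$. Your explicit treatment of the origin via Lemma~\ref{lem:0,0} is a small extra care that the paper leaves implicit, but it does not change the approach.
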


\begin{proof}[Proof of Lemma~\ref{lem:n ge1}]
Let $r:=(\rho-1)/\cos\tfrac\th2$; this definition makes sense, in view of \eqref{eq:cos th/2 ne0}. 
Then, by \eqref{eq:old}, $\vv(\th,r)=(x,y,z)$ 
for some $z\in\R$. Hence, $Z_\vv(x,y)\ne\emptyset$ and $n_\vv(x,y)\ge1$. 
\end{proof}


%

\begin{lemma}\label{lem:y=0}
Suppose that $x\notin\{-1,0\}$ and $y=0$. Then $n_\vv(x,y)=1$. 
\end{lemma}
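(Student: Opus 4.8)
The plan is to analyze the equation $\vv(t,r)=(x,0,z)$ directly when $x\notin\{-1,0\}$ and $y=0$, and show that the set $P(x,0)$ of parameter pairs producing such a point, when passed through the last coordinate $r\sin\tfrac t2$, yields exactly one value. First I would use the ``polar'' representation: since $(x,y)=(x,0)$ with $x\ne 0$, we have $\rho=|x|$ and $\th\in\{0,\pi\}$ is forbidden only in the sense that the point lies on the $x$-axis, so $\th=0$ if $x>0$ and there is a subtlety with $\th$ for $x<0$ — but actually the cleaner route is to work from the defining equations $(1+r\cos\tfrac t2)\cos t=x$ and $(1+r\cos\tfrac t2)\sin t=0$. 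The second equation forces $\sin t=0$ or $1+r\cos\tfrac t2=0$; the latter would force $x=0$, contradicting our hypothesis. Hence $\sin t=0$, so $t\in\{0,\pi\}$ (recalling $t\in[0,2\pi)$).

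Next I would dispose of $t=\pi$: then $\cos\tfrac t2=0$, so the first equation gives $\cos\pi=-1=x$, contradicting $x\ne -1$. Therefore $t=0$, whence $\cos\tfrac t2=1$, $\sin\tfrac t2=0$, and the first equation reads $1+r=x$, i.e.\ $r=x-1$ (which is nonzero since $x\ne 1$, though that is not needed). Then the last coordinate is $r\sin\tfrac t2=r\cdot 0=0$. So $Z_\vv(x,0)=\{0\}$, giving $n_\vv(x,0)=\card\{0\}=1$, as claimed. One should also check this $(t,r)=(0,x-1)$ genuinely lies in $P_\infty=[0,2\pi)\times\R$, which is immediate.

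The main obstacle — really the only thing requiring care — is making sure no parameter pair with $t=\pi$ or with $1+r\cos\tfrac t2=0$ sneaks in; both are ruled out precisely by the two exclusions $x\ne -1$ and $x\ne 0$ in the hypothesis, so the argument is tight and the hypotheses are used exactly. A secondary minor point is that $Z_\vv$ is defined via $P_\infty=\bigcup_{\de>0}P_\de$, so there is no width restriction to worry about: any real $r$ is allowed, and indeed we found the unique admissible $r=x-1$ with no constraint violated. I expect the whole proof to be three or four lines once written out.
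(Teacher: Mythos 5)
Your argument is correct and follows essentially the same route as the paper: from $(1+r\cos\tfrac t2)\sin t=0$ and $x\ne0$ you force $\sin t=0$, exclude $t=\pi$ via $x\ne-1$, and land on the unique pair $(t,r)=(0,x-1)$, so $P(x,0)=\{(0,x-1)\}$ and the cross-section is $\{0\}$. The only cosmetic difference is that the paper gets the lower bound $n_\vv(x,y)\ge1$ from its separate Lemma~\ref{lem:n ge1} rather than by noting directly (as you do) that $(0,x-1)$ realizes the point, which is equally fine.
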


\begin{proof}[Proof of Lemma~\ref{lem:y=0}]
Take any $(t,r)\in P(x,0)$, so that $(1+r\cos\tfrac t2)\cos t=x\ne0$ and $(1+r\cos\tfrac t2)\sin t=0$. Then $1+r\cos\tfrac t2\ne0$, $\sin t=0$, and $t\in\{0,\pi\}$. If $t=\pi$, then $x=-1$, a contradiction. So, $t=0$, $1+r=x$, and $r=x-1$. This means that $P(x,0)\subseteq\{(0,x-1)\}$ and $N(x,0)\le1$. It remains to recall that $n_\vv(x,y)\le N(x,y)$ (by \eqref{eq:n le N}) and refer to Lemma~\ref{lem:n ge1}. 
\end{proof}

\begin{lemma}\label{lem:y ne0}
Suppose that $y\ne0$. Then $n_\vv(x,y)=1$ if $x=-1$ and $n_\vv(x,y)=2$ if $x\ne-1$. 
\end{lemma}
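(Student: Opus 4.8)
The plan is to fix $(x,y)$ with $y\ne0$ and, just as in the proof of Lemma~\ref{lem:y=0}, analyze the set $P(x,y)$ of all $(t,r)\in P_\infty$ mapping to the vertical fiber over $(x,y)$; the conclusion will then follow from \eqref{eq:n le N} together with Lemma~\ref{lem:n ge1}, provided we can show $n_\vv(x,y)$ equals the claimed value rather than merely being bounded by it. First I would observe that if $(t,r)\in P(x,y)$ then $1+r\cos\tfrac t2\ne0$ (else $(x,y)=(0,0)$, contradicting $y\ne0$), so $\cos t$ and $\sin t$ are proportional to $x$ and $y$ with the same sign of the proportionality constant; hence $(t,r)\in P(x,y)$ forces $t\in\{\th,\th+\pi\}$ where $\th=\th(x,y)\in[0,\pi)$ is the polar angle from \eqref{eq:polar}, and in either case $r$ is then uniquely determined by $1+r\cos\tfrac t2=\pm\rho$. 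Concretely, $t=\th$ gives $r=(\rho-1)/\cos\tfrac\th2$ (well-defined by \eqref{eq:cos th/2 ne0}), and $t=\th+\pi$ gives $1+r\cos\tfrac{\th+\pi}2=-\rho$, i.e. $r=(\rho+1)/\sin\tfrac\th2$ — but this second branch is only available when $\sin\tfrac\th2\ne0$, that is, when $\th\ne0$, i.e. when $x\ne-1$ (recall $y\ne0$ already rules out $\th$ being such that the point is on the positive $x$-axis, and $\th=0$ with $y\ne0$ is impossible, so the genuine obstruction to the second branch is exactly $\th=\pi$... wait: $\th\in[0,\pi)$, so $\sin\tfrac\th2=0$ iff $\th=0$ iff $(x,y)$ on the positive $x$-axis, which is excluded by $y\ne0$; thus actually both branches always give a valid $r$ when $y\ne 0$). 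So $N(x,y)\le2$ always, with the two candidate preimages $p_-:=(\th,(\rho-1)/\cos\tfrac\th2)$ and $p_+:=(\th+\pi,(\rho+1)/\sin\tfrac\th2)$, reduced mod $2\pi$ in the first coordinate.

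The substantive point, and the part I expect to be the main obstacle, is to compute the two $z$-values these preimages produce and decide when they coincide. Writing $z_-:=r_-\sin\tfrac\th2=(\rho-1)\tan\tfrac\th2$ for $p_-$ and $z_+:=r_+\sin\tfrac{\th+\pi}2=(\rho+1)\cos\tfrac\th2 / \sin\tfrac\th2 \cdot$ (care with the half-angle of $\th+\pi$, taken mod $2\pi$ to land in $[0,2\pi)$, which flips a sign) $=\pm(\rho+1)\cot\tfrac\th2$, I would show $z_-\ne z_+$ precisely when $x\ne-1$. The identity $z_- = z_+$ should reduce, after clearing denominators, to $(\rho-1)\sin^2\tfrac\th2 = \pm(\rho+1)\cos^2\tfrac\th2$, and pushing this through the half-angle formulas $\cos\th=1-2\sin^2\tfrac\th2=2\cos^2\tfrac\th2-1$ together with $x=\rho\cos\th$ should collapse it to $x=-1$. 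Thus: if $x=-1$ then $z_-=z_+$ (both preimages give the same height), so $n_\vv(x,y)=1$; if $x\ne-1$ then $z_-\ne z_+$, so $Z_\vv(x,y)$ has exactly two elements and $n_\vv(x,y)=2$. Lemma~\ref{lem:n ge1} guarantees $P(x,y)$ is nonempty (indeed $p_-$ always works), so these counts are exact and not just upper bounds.

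One small care point: when $x=-1$ and $y\ne0$ we have $\th\in(0,\pi)$ with $\cos\th=x/\rho=-1/\rho$, and one must check $p_-$ and $p_+$ are genuinely distinct points of $P_\infty$ (their $t$-coordinates differ by $\pi$ mod $2\pi$, hence are distinct in $[0,2\pi)$) yet produce the same $z$ — this is consistent with $N(-1,y)=2$ but $n_\vv(-1,y)=1$, which is exactly the phenomenon the lemma records. I would present the half-angle bookkeeping carefully but briefly, since it is the only place sign errors can creep in, and then state the two cases as above; the rest is immediate from \eqref{eq:n le N} and Lemma~\ref{lem:n ge1}.
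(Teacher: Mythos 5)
Your proposal is correct and follows essentially the same route as the paper: identify the two preimages $\big(\th,(\rho-1)/\cos\tfrac\th2\big)$ and $\big(\th+\pi,(\rho+1)/\sin\tfrac\th2\big)$ in $P(x,y)$, then compare their $z$-values and reduce the coincidence condition to $x=-1$. The one sign you hedge on resolves favorably: since $\th\in(0,\pi)$, no reduction mod $2\pi$ is needed and $\sin\tfrac{\th+\pi}2=\cos\tfrac\th2>0$, so $z_+=(\rho+1)\cot\tfrac\th2$ and the equation $(\rho-1)\sin^2\tfrac\th2=(\rho+1)\cos^2\tfrac\th2$ indeed collapses to $\rho\cos\th=-1$, i.e.\ $x=-1$, exactly as in the paper.
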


\begin{proof}[Proof of Lemma~\ref{lem:y ne0}]
In view of \eqref{eq:old} and the condition $y\ne0$, for any $(t,r)\in P(x,y)$ one has $\sin t\ne0$ and hence $\cos\tfrac t2\ne0$, so that $r$ is uniquely determined, by the formula $r=\big(\tfrac y{\sin t}-1\big)/\cos\tfrac t2$, given $t$ and the condition $(t,r)\in P(x,y)$. 
Therefore, by \eqref{eq:polar}, the condition that two points in $P(x,y)$ are distinct means precisely that they are of the form $(\th,r_1)$ and $(\th+\pi,r_2)$ for some $\th\in[0,\pi)$ and real $r_1$ and $r_2$ such that $1+r_1\cos\tfrac\th2=\rho=-(1+r_2\cos\tfrac{\th+\pi}2)$ or, equivalently, 
\begin{equation}\label{eq:r1,r2,rho}
	r_1=\dfrac{\rho-1}{\cos\tfrac\th2}\quad\text{and}\quad r_2=\dfrac{\rho+1}{\sin\tfrac\th2}. 
\end{equation}
Here we have taken into account that $\cos\tfrac\th2\ne0$ (because $\th\in[0,\pi)$) and $\sin\tfrac\th2\ne0$ (because otherwise $\th=0$ and hence $y=\rho\sin\th=0$). So, here 
$P(x,y)=\Big\{\Big(\th,\dfrac{\rho-1}{\cos\tfrac\th2}\Big),\Big(\th+\pi,
\dfrac{\rho+1}{\sin\tfrac\th2}\Big)\Big\}$.  
Hence, in view of \eqref{eq:old} and \eqref{eq:r1,r2,rho}, the condition $n_\vv(x,y)=1$ means precisely that $\dfrac{\rho-1}{\cos\tfrac\th2}\,\sin\tfrac\th2=\dfrac{\rho+1}{\sin\tfrac\th2}\,\sin\tfrac{\th+\pi}2$, which can be rewritten as $\rho\cos\th=-1$ or simply as $x=-1$. 
It also follows that in the remaining case when $x\ne-1$ one has $n_\vv(x,y)=2$.  
\end{proof}

Now the first two lines in \eqref{eq:n=} follow by Lemma~\ref{lem:y ne0}, the third line there by Lemma~\ref{lem:y=0}. *, and the fourth line there by Lemmas~\ref{lem:-1,0} and \ref{lem:-1,0}. 
The second sentence in Proposition~\ref{prop:n,Z_v} and hence the fourth line in \eqref{eq:n=} follow by Lemmas~\ref{lem:-1,0} and \ref{lem:-1,0}. 

Proposition~\ref{prop:n,Z_v} is thus completely proved. 
%
\end{proof}

\begin{proof}[Proof of Theorem~\ref{th:old}]
As in the proof of Theorem~\ref{th:new}, here it is easy to see that the map $P_\de\ni(t,r)\mapsto\vv(t,r)\in\vv(P_\de)$ is a homeomorphism 
iff it is one-to-one. 

Next, suppose that $\de\ge2$. Then $(0,-2)$ and $(\pi,0)$ are distinct points in $P_\de$ and, 
in view of \eqref{eq:old}, $\vv(0,-2)=(-1,0,0)=\vv(\pi,0)$. So, the map $P_\de\ni(t,r)\mapsto\vv(t,r)\in\vv(P_\de)$ is not one-to-one if $\de\ge2$. 

Suppose now that $\de<2$. Take any points $(t_1,r_1)$ and $(t_2,r_2)$ such that $\vv(t_1,r_1)=\vv(t_2,r_2)=:(x,y,z)$. We have then to show that $(t_1,r_1)=(t_2,r_2)$.  

If $t_1=t_2=:t$, then, by \eqref{eq:old}, 
\begin{equation}\label{eq:t,r1,r2}
	0=\|\vv(t,r_1)-\vv(t,r_2)\|^2=(r_1-r_2)^2, 
\end{equation}
where $\|\cdot\|$ denotes the Euclidean norm on $\R^3$. Hence, $r_1=r_2$, and the desired conclusion $(t_1,r_1)=(t_2,r_2)$ follows, whenever $t_1=t_2$. 
So, w.l.o.g.\ $t_1\ne t_2$. 

Next, consider the case when $\cos t_1=0=\cos t_2$, so that w.l.o.g.\ $t_1=\tfrac\pi2$ and $t_2=\tfrac{3\pi}2$, whence, in view of \eqref{eq:old}, $[y=]1+r_1/\sqrt2=-1+r_2/\sqrt2$ and $[z=]r_1/\sqrt2=r_2/\sqrt2$, so that $r_1=r_2=:r$ and $1+r/\sqrt2=-1+r/\sqrt2$, which is a contradiction. 
Hence, the case when $\cos t_1=0=\cos t_2$ is impossible, given the other conditions. 

Further, consider the case when exactly one of the numbers $\cos t_1$ and $\cos t_2$ is $0$. Then w.l.o.g.\ $\cos t_1=0\ne\cos t_2$, which implies $t_1=\pi+\vp\tfrac\pi2$ for some $\vp\in\{-1,1\}$, $x=0$, $1+r_2\cos\tfrac{t_2}2=0$, $y=0$, $1-r_1\vp/\sqrt2=1+r_1\cos\tfrac{t_1}2=\vp y=0$, $r_1=\vp\sqrt2$, and  $z=r_1\sin\tfrac{t_1}2=\vp=r_2\sin\tfrac{t_2}2$. Further, equalities $1+r_2\cos\tfrac{t_2}2=0$ and $\vp=r_2\sin\tfrac{t_2}2$ yield $\tan\tfrac{t_2}2=-\vp$, whence $\cos t_2=0$, which contradicts the current case condition. 

Further yet, consider the case when $\cos t_1\ne0$ and $\cos t_2\ne0$ but $x=0$. Then $1+r_1\cos\tfrac{t_1}2=0=1+r_2\cos\tfrac{t_2}2$ and $r_1\sin\tfrac{t_1}2=[z=]r_2\sin\tfrac{t_2}2$, whence $r_1=-1/\cos\tfrac{t_1}2$, $r_2=-1/\cos\tfrac{t_2}2$, $\tan\tfrac{t_1}2=\tan\tfrac{t_2}2$, and $t_1=t_2$, which contradicts the assumption $t_1\ne t_2$. 

It remains to consider the case when $x\ne0$, so that $\cos t_1\ne0$ and $\cos t_2\ne0$. Then, by \eqref{eq:old}, $\tan t_1=\tfrac yx=\tan t_2$, so that $|t_1-t_2|=\pi$. So, w.l.o.g.\ $t_1=t$ and $t_2=t+\pi$ for some $t\in[0,\pi)$. Hence, 
$[x=](1+r_1\cos\tfrac t2)\cos t=(1-r_2\sin\tfrac t2)(-\cos t)$ and 
$[y=](1+r_1\cos\tfrac t2)\sin t=(1-r_2\sin\tfrac t2)(-\sin t)$. Since $\cos t$ and $\sin t$ do not vanish simultaneously, we have $1+r_1\cos\tfrac t2=-(1-r_2\sin\tfrac t2)$. Also, 
$r_1\sin\tfrac t2=[z=]r_2\cos\tfrac t2$. So, we have the system $r_1\cos\tfrac t2-r_2\sin\tfrac t2=-2$ and $r_1\sin\tfrac t2-r_2\cos\tfrac t2=0$ of linear equations for $r_1,r_2$. Its determinant is $-\cos^2\tfrac t2+\sin^2\tfrac t2=-\cos t\ne0$, and its only solution is given by $r_1=-2\cos\tfrac t2/\cos t$ and $r_2=-2\sin\tfrac t2/\cos t$, whence $\de^2\ge r_1^2\vee r_2^2\ge|r_1^2-r_2^2|=4/|\cos t|\ge4$, so that $\de\ge2$, which contradicts the assumption $\de<2$. 

This completes the proof of Theorem~\ref{th:old}. 
\end{proof}

\bibliographystyle{elsarticle-num} 

\bibliography{P:/pCloudSync/mtu_pCloud_02-02-17/bib_files/citations12.13.12}

\def\cprime{$'$} \def\polhk#1{\setbox0=\hbox{#1}{\ooalign{\hidewidth
  \lower1.5ex\hbox{`}\hidewidth\crcr\unhbox0}}}
  \def\polhk#1{\setbox0=\hbox{#1}{\ooalign{\hidewidth
  \lower1.5ex\hbox{`}\hidewidth\crcr\unhbox0}}}
  \def\polhk#1{\setbox0=\hbox{#1}{\ooalign{\hidewidth
  \lower1.5ex\hbox{`}\hidewidth\crcr\unhbox0}}} \def\cprime{$'$}
  \def\polhk#1{\setbox0=\hbox{#1}{\ooalign{\hidewidth
  \lower1.5ex\hbox{`}\hidewidth\crcr\unhbox0}}} \def\cprime{$'$}
  \def\polhk#1{\setbox0=\hbox{#1}{\ooalign{\hidewidth
  \lower1.5ex\hbox{`}\hidewidth\crcr\unhbox0}}} \def\cprime{$'$}
  \def\cprime{$'$}
\begin{thebibliography}{1}
\expandafter\ifx\csname url\endcsname\relax
  \def\url#1{\texttt{#1}}\fi
\expandafter\ifx\csname urlprefix\endcsname\relax\def\urlprefix{URL }\fi
\expandafter\ifx\csname href\endcsname\relax
  \def\href#1#2{#2} \def\path#1{#1}\fi

\bibitem{wolfram_moebius}
E.~W. Weisstein, {M}\"obius strip. {F}rom {M}ath{W}orld -- {A} {W}olfram {W}eb
  {R}esource, \url{http://mathworld.wolfram.com/MoebiusStrip.html} (2016).

\bibitem{schwarz90}
G.~E. Schwarz, \href{http://dx.doi.org/10.2307/2324325}{The dark side of the
  {M}oebius strip}, Amer. Math. Monthly 97~(10) (1990) 890--897.
\newline\urlprefix\url{http://dx.doi.org/10.2307/2324325}

\bibitem{wiki_moebius}
Wikipedia, {M}\"obius strip,
  \url{https://en.wikipedia.org/wiki/M%C3%B6bius_strip} (2016).

\bibitem{halp-weaver}
B.~Halpern, C.~Weaver, Inverting a cylinder through isometric immersions and
  isometric embeddings, Trans. Amer. Math. Soc. 230 (1977) 41--70.

\bibitem{starostin-heijden}
E.~L. Starostin, G.~H.~M. van~der Heijden, The shape of a {M}\"obius strip,
  Nature Materials 6 (2007) 563--567.

\end{thebibliography}


\end{document}